\newcommand{\nn}{\nonumber}
\newcommand{\eps}{\varepsilon}
\newcommand{\bT}{\mathbf{T}}
\newcommand{\bV}{\mathbf{V}}
\newcommand{\bU}{\mathbf{U}}
\newcommand{\bx}{\mathbf{x}}
\begin{document}

\markboth{W. Bao, Y. Feng and J. Yin}{Improved uniform error bounds for long-time Dirac}

%
\catchline{}{}{}{}{}
%

\title{Improved uniform error bounds on time-splitting methods for the long-time dynamics of the Dirac equation with small potentials}

\author{Weizhu Bao}

\address{Department of Mathematics,
National University of Singapore, Singapore, 119076, Singapore \\
matbaowz@nus.edu.sg}

\author{Yue Feng}

\address{Department of Mathematics,
National University of Singapore, Singapore, 119076, Singapore \\
fengyue@u.nus.edu}

\author{Jia Yin}

\address{Applied Mathematics and Computational Research (AMCR) Division, Lawrence Berkeley National Laboratory, Berkeley, CA 94720, USA \\
	jiayin@lbl.gov}

\maketitle

\begin{history}
\received{(Day Month Year)}
\revised{(Day Month Year)}
\comby{(xxxxxxxxxx)}
\end{history}

\begin{abstract}
	We establish improved uniform error bounds on time-splitting methods for the long-time dynamics of the Dirac equation with small electromagnetic potentials characterized by a dimensionless parameter $\eps\in (0, 1]$ representing the amplitude of the potentials. We begin with a semi-discritization of the Dirac equation in time by a time-splitting method, and then followed by a full-discretization in space by the Fourier pseudospectral method. Employing the unitary flow property of the second-order time-splitting method for the Dirac equation, we prove uniform error bounds at $C(t)\tau^2$ and $C(t)(h^m+\tau^2)$ for the semi-discretization and full-discretization, respectively, for any time $t\in[0,T_\eps]$ with  $T_\eps = T/\eps$ for $T > 0$,  which are uniformly for $\eps \in (0, 1]$, where $\tau$ is the time step, $h$ is the mesh size, $m\geq 2$ depends on the regularity of the solution, and $C(t) = C_0 + C_1\eps t\le C_0+C_1T$ grows at most linearly with respect to $t$ with $C_0\ge0$ and $C_1>0$ two constants independent of  $t$, $h$, $\tau$ and $\eps$. Then by adopting the {\sl{regularity compensation oscillation}} (RCO) technique which controls the high frequency modes by the regularity of the solution and low frequency modes by phase cancellation and energy method, we establish improved uniform error bounds at $O(\eps\tau^2)$ and $O(h^m +\eps\tau^2)$ for the semi-discretization and full-discretization, respectively, up to the long-time $T_\eps$. Numerical results are reported to confirm our error bounds and to demonstrate that they are sharp. Comparisons on the accuracy of different time discretizations for the Dirac equation are also provided.
\end{abstract}

\keywords{Dirac equation; long-time dynamics; time-splitting methods; improved uniform error bound; regularity compensation oscillation.}

\ccode{AMS Subject Classification: 35Q41, 65M70, 65N35, 81Q05}

\section{Introduction} \label{sec:intro}
In this paper, we consider the Dirac equation in one or two dimensions (1D or 2D), which can be represented in the two-component form with wave function $\Phi : = \Phi(t, {\mathbf{x}}) = (\phi_1(t, {\mathbf{x}}), \phi_2(t, {\mathbf{x}}))^T \in \mathbb{C}^2$ as\cite{Dirac1,Dirac2,Thaller}
\begin{equation}
\label{eq:Dirac_21}
i\partial_t\Phi =  \left(- i\sum_{j = 1}^{d}
	\sigma_j\partial_j + \sigma_3 \right)\Phi+ \varepsilon \left(V(\mathbf{x})I_2 - \sum_{j = 1}^{d}A_j(\mathbf{x})\sigma_j\right)\Phi, \quad \bx \in \Omega,
\end{equation}
where $\Omega \subset \mathbb{R}^d$ $(d = 1, 2)$ is a bounded domain equipped with periodic boundary condition. Here,  $i = \sqrt{-1}$, $t \ge 0$ is time, $\mathbf{x} = (x_1, \ldots, x_d)^T \in \mathbb{R}^d$, $\partial_j = \frac{\partial}{\partial x_j} (j = 1, \ldots, d)$, $\varepsilon \in (0, 1]$ is a dimensionless parameter, $V(\mathbf{x})$ and $A_j({\mathbf x})$ are the given real-valued time-independent electric and magnetic potentials, respectively. $I_2$ is the $2 \times 2$ identity matrix, and $\sigma_1, \sigma_2, \sigma_3$ are the Pauli matrices defined as
\begin{equation}
\label{Pauli}
\sigma_1 = \begin{pmatrix} 0 &\ \  1\\ 1 &\ \  0 \end{pmatrix}, \quad
\sigma_2 = \begin{pmatrix} 0 & \ \ -i \\ i &\ \   0\end{pmatrix}, \quad
\sigma_3 = \begin{pmatrix} 1 &\ \ 0 \\ 0 &\ \  -1 \end{pmatrix}.
\end{equation}
In order to study the dynamics of the Dirac equation \eqref{eq:Dirac_21}, the initial condition is taken as
\begin{equation}
\label{eq:initial}
\Phi(t=0, \mathbf{x}) = \Phi_0(\mathbf{x}), \quad {\bf x} \in \overline{\Omega}.
\end{equation}

For the Dirac equation \eqref{eq:Dirac_21} with $\varepsilon = 1$, i.e., the classical regime, there are comprehensive analytical and numerical results in the literatures. Along the analytical front, for the existence and multiplicity of bound states and/or standing wave solutions, we refer to Refs.~\refcite{ES,GGT,Gross} and references therein. In the numerical aspect, different numerical methods have been proposed and analyzed, such as the finite difference time domain (FDTD) methods\cite{BCJT,MY}, exponential wave integrator Fourier pseudospectral (EWI-FP) method\cite{BCJT,BCJY}, and time-splitting Fourier pseudospectral (TSFP) method\cite{BCY,BY}, etc. For more details related to the numerical schemes, we refer to Refs.~\refcite{BSG,FLB,Gosse,GSX,MK} and references therein.

However, to our knowledge, much less research has been done for the long-time dynamics when $\eps\in(0, 1]$. With a small $\eps$, we can study the long-time dynamics of the Dirac equation \eqref{eq:Dirac_21} for $t\in [0,T_\eps]$ with  $T_{\varepsilon} := T/\eps$ for $T>0$, while either $\eps$ or $T$ can be fixed (or changed) in the analysis. When $\eps\in(0, 1]$ is fixed, e.g. $\eps=1$, we would like to investigate how the errors of different numerical schemes perform for $t \in [0, T_\eps]$ with increasing $T$. On the other hand, when $T$ is fixed, we are interested in analyzing the dependency of errors for different numerical methods on the parameter $\eps$ for $t \in [0, T_{\varepsilon}]$. In our recent works, we examined the long-time error bounds for finite difference time domain (FDTD) methods and finite difference Fourier pseudospectral (FDFP) methods, where the finite difference was applied to discretize the Dirac equation \eqref{eq:Dirac_21} in time and different discretizations were taken in space. We rigorously proved that the FDTD methods share an error bound at $O(\frac{h^2}{\eps}+\frac{\tau^2}{\eps})$  up to the long-time $T_\eps$ with $h$ the spatial mesh size and $\tau$ the temporal step size, while the FDFP methods exhibit an error bound at $O(h^m+\frac{\tau^2}{\eps})$ up to the long-time\cite{FY}.
The uniform spatial error bound with respect to $\eps$ for the FDFP methods indicate that these methods have better spatial resolution than the FDTD methods to solve the Dirac equation with small potentials in the long-time regime. Nevertheless, both types of methods face severe numerical burdens when $\varepsilon \to 0^+$ due to the $\eps$-dependence of the temporal error. To deal with this problem, we adopted the exponential wave integrator (EWI) method for time discretization, and obtained a uniform error bound
at $O(h^m+\tau^2)$ up to the long-time\cite{FY2}.

To further improve the temporal error for $\eps\in (0, 1]$, we consider the time-splitting methods, which have been widely used to numerically solve dispersive partial differential equations (PDEs)\cite{BaoC,GL,HWL,MQ}. It has been proved that the temporal error bounds grow linearly when the time-splitting methods are applied to the Maxwell's equations\cite{CLL1,CLL2}. In addition, the unitary flow property of the time-splitting methods contributes to the time-dependent uniform error bounds for the Schr\"odinger equation and the improved uniform error bounds for the Schr\"odinger/nonlinear Schr\"odinger equation with the help of the {\bf regularity compensation oscillation (RCO)} technique\cite{BCF1,BCF2}. This improves the analysis in the previous studies and obtains surprising error estimates for the time-splitting methods\cite{BCF1,BCF2}. The aim of this paper is to establish the improved uniform error bounds on time-splitting methods for the long-time dynamics of the Dirac equation with small potentials. First, we prove a uniform error bound where the error constant grows linearly with respect to the time $t$. Based on the error bound, for a given accuracy $\delta_0$ and time step $\tau$, the second order time-splitting (Strang splitting) could be applied to simulate the dynamics of the Dirac equation up to $O(\delta_0/\tau^2)$ when $\eps=1$, i.e., the smaller the time step $\tau$, the longer the dynamics that can be calculated. Then by employing the RCO technique\cite{BCF1,BCF2},  we establish improved uniform error bounds at $O(\eps\tau^2 + \tau_0^m)$ and $O(h^{m}+\eps\tau^2+ \tau_0^m)$ for the semi-discretization and full-discretization, respectively,  for the Dirac equation with $O(\eps)$-potentials up to the long-time at $O(1/\eps)$. In the error bounds, $m \geq 2$ depends on the regularity of the exact solution and $\tau_0 \in (0, 1)$ is a fixed chosen  parameter. When the solution is smooth, i.e. $m\to \infty$, then the error bounds collapse to $O(\eps\tau^2)$ and $O(h^{m}+\eps\tau^2)$ for the semi-discretization and full-discretization, respectively.

The rest of this paper is organized as follows. In Section 2, the second-order time-splitting methods including the semi-discretization and full-discretization for the long-time dynamics of the Dirac equation with small potentials are presented. In Section 3, uniform error bounds for the time-splitting methods are established up to the long-time at $O(1/\eps)$ and the errors are shown to grow linearly with respect to the time $t$. In Section 4, we prove the improved uniform error bounds rigorously by adopting the RCO technique. Extensive numerical results are reported in Section 5. Finally, some conclusions are drawn in Section 6. Throughout this paper, we adopt the notation $A \lesssim B$ to represent that there exists a generic constant $C > 0$, which is independent of the mesh size $h$ and time step $\tau$ as well as the parameter $\varepsilon$ such that $|A| \leq C B$.

\section{The time-splitting methods} \label{sec:TSFP}
For simplicity, in the following sections, we focus on the Dirac equation \eqref{eq:Dirac_21} in 1D, i.e., $d=1$ in \eqref{eq:Dirac_21}, for the numerical methods and corresponding analysis. The methods and results can be easily generalized to \eqref{eq:Dirac_21} in 2D, i.e., $d=2$, and to the four-component Dirac equation given in Refs.~\refcite{BCJT} and \refcite{BCJY}.

The Dirac equation \eqref{eq:Dirac_21} in 1D on the bounded computational domain $\Omega = (a, b)$ with periodic boundary condition collapses to
\begin{align}
\label{eq:Dirac_1D}
&i\partial_t\Phi =  (- i \sigma_1 \partial_x + \sigma_3 )\Phi+ \eps(V(x)I_2 - A_1(x)\sigma_1)\Phi, \qquad x \in \Omega,\quad t > 0,\\
\label{eq:ib}
&\Phi(t, a) = \Phi(t, b),\quad t \geq 0; \qquad  \Phi(0, x) = \Phi_0(x),\quad x \in \overline{\Omega},
\end{align}
where $\Phi := \Phi(t, x)$ and $\Phi_0(a) = \Phi_0(b)$.

\subsection{The semi-discretization}
Define the operators
\begin{equation}\label{eq:operators}
    \bT := -\sigma_1\partial_x - i\sigma_3, \quad \bV := -i(V(x)I_2 - A_1(x)\sigma_1),
\end{equation}
then \eqref{eq:Dirac_1D} can be expressed by
\begin{equation}
    \partial_t\Phi(t, x) = (\bT+\eps\bV)\Phi(t, x), \ x \in \Omega, \ t > 0.
\label{eq:Dirac1d}
\end{equation}

Take a time step size $\tau>0$ and denote the time grids as $t_n=n\tau$ for $n=0,1,\ldots$. As both $\bT$ and $\bV$ are time-independent, the solution to \eqref{eq:Dirac1d} with \eqref{eq:ib} can be propagated through
\begin{equation}\label{eq:prop}
    \Phi(t_{n+1},x) = e^{\tau(\bT + \eps\bV)}\Phi(t_n, x), \quad n = 0, 1, \ldots.
\end{equation}
To approximate the operator $e^{\tau(\bT+\eps\bV)}$, we apply the second-order  time-splitting (Strang splitting)\cite{Strang}, which gives
\begin{equation}
e^{\tau(\bT + \eps\bV)} \approx e^{\frac{\tau}{2}\bT}e^{\eps\tau\bV}e^{\frac{\tau}{2}\bT}.
\end{equation}
Therefore, the semi-discretization of the Dirac equation \eqref{eq:Dirac_1D} via Strang splitting can be expressed as
\begin{equation}\label{eq:S2}
\Phi^{[n+1]}(x) = \mathcal{S}_{\tau}(\Phi^{[n]}(x)) :=  e^{\frac{\tau}{2}\bT}e^{\eps\tau\bV}e^{\frac{\tau}{2}\bT}\Phi^{[n]}(x), \quad n=0, 1, \ldots,
\end{equation}
where we take the initial condition $\Phi^{[0]}(x):=\Phi_0(x)$ for $x\in\overline{\Omega}$. Here, $\Phi^{[n]}(x)$ is the approximation of $\Phi(t_n,x)$.

\subsection{The full-discretization}
By noticing the definition of $\bV$ in \eqref{eq:operators}, it is easy to derive that
\begin{equation}
    e^{\eps\tau\bV}\Phi(t, x) = e^{-i \eps \tau\left(V(x) I_2 - A_1(x) \sigma_1\right)} \Phi(t, x), \quad x \in \overline{\Omega}, \quad t>0.
\end{equation}
On the other hand, to get $e^{\tau\bT}\Phi(t, x)$, we can discretize \eqref{eq:Dirac1d} in space by the Fourier spectral method, and then it is possible to integrate the operator analytically in the phase space. We take $M+1$ uniformly sampled grid points in $\overline{\Omega}$ with $M$ a positive even integer
\begin{equation}
    x_j = a + jh, \quad h = \frac{b-a}{M}, \quad j=0, 1, \ldots, M,
\end{equation}
and we denote the sets $X_M$, $Y_M$, $Z_M$ as
\begin{align*}
  & X_M  = \left\{U = (U_0, U_1, \ldots, U_M)^T~|~ U_j\in\mathbb{C}^2, \, j=0, 1, \ldots, M, \, U_0 = U_M\right\},\\
    & Y_M = Z_M\times Z_M, \quad Z_M = {\rm{span}}\{\phi_l(x)=e^{i\mu_l(x-a)}, \, l\in\mathcal{T}_M\},
\end{align*}
where the index set $\mathcal{T}_M = \{l~|~l=-M/2, -M/2+1, \ldots, M/2-1\}$, and $\mu_l = 2\pi l/(b-a)$ for $l\in\mathcal{T}_M$. The projection operator $P_M: \left(L^2(\Omega)\right)^2\rightarrow Y_M$ is defined as
\begin{equation*}
    \left(P_MU\right)(x) := \sum_{l\in\mathcal{T}_M}\widehat{U}_le^{i\mu_l(x-a)}, \quad U(x)\in \left(L^2(\Omega)\right)^2,
\end{equation*}
where
\begin{equation}
    \widehat{U}_l = \frac{1}{b-a}\int_a^bU(x)e^{-i\mu_l(x-a)}dx, \quad l\in\mathcal{T}_M;
\end{equation}
and by taking $\left(C_{\rm per}(\overline{\Omega})\right)^2 = \left\{U \in \left(C(\overline{\Omega})\right)^2~|~U(a) = U(b) \right\}$, the interpolation operator $I_M : \left(C_{\rm per}(\overline{\Omega})\right)^2\rightarrow Y_M$ or $I_M: X_M\rightarrow Y_M$ is defined as
\begin{equation*}
    \left(I_MU\right)(x) := \sum_{l\in\mathcal{T}_M}\widetilde{U}_le^{i\mu_l(x-a)}, \qquad  U(x)\in \left(C_{\rm per}(\overline{\Omega})\right)^2\quad {\rm{or}}\quad U\in X_M,
\end{equation*}
where
\begin{equation}
    \widetilde{U}_l = \frac{1}{M}\sum_{j=0}^{M-1}U_je^{-2ijl\pi/M}, \quad l\in\mathcal{T}_M.
\end{equation}
Here we take $U_j = U(x_j)$ if $U$ is a function.

Let $\Phi_j^n$ be the numerical approximation of $\Phi(t_n, x_j)$ and denote $\Phi^n = \left(\Phi_0^n, \Phi_1^n, \ldots, \Phi_M^n\right)^T\in X_M$ as the solution vector at $t=t_n$. Take the initial value $\Phi_j^0=\Phi_0(x_j)$ for $j=0, \ldots, M$, then the time-splitting Fourier pseudospectral (TSFP) method for discretizating the Dirac equation \eqref{eq:Dirac_1D} is given as
\begin{equation}
\begin{split}
&\Phi^{(1)}_j = \sum_{l \in \mathcal{T}_M} e^{-i\frac{\tau \Gamma_l}{2}}\widetilde{(\Phi^n)}_l\; e^{i\mu_l(x_j-a)} =  \sum_{l \in \mathcal{T}_M} Q_l\; e^{-i\frac{\tau D_l}{2}}(Q_l)^T \widetilde{(\Phi^n)}_l\; e^{\frac{2ijl\pi}{M}},\\
& \Phi^{(2)}_j  = e^{-i\varepsilon\tau G(x_j)} \Phi^{(1)}_j = P e^{-i\varepsilon \Lambda_j} P^T \Phi^{(1)}_j,\\
&\Phi^{n+1}_j = \sum_{l \in \mathcal{T}_M} e^{-i\frac{\tau \Gamma_l}{2}}\widetilde{(\Phi^{(2)})}_l\; e^{i\mu_l(x_j-a)} =  \sum_{l \in \mathcal{T}_M} Q_l\; e^{-i\frac{\tau D_l}{2}}(Q_l)^T \widetilde{(\Phi^{(2)})}_l\; e^{\frac{2ijl\pi}{M}},
\end{split}
\label{eq:TSFP}
\end{equation}
for $n=0,1,\ldots$, where $\Gamma_l = \mu_l\sigma_1 + \sigma_3 = Q_l D_l(Q_l)^T$ with $\delta_l =\sqrt{1+\mu^2_l}$, $(Q_l)^T$ is the transpose of $Q_l$ and
\begin{equation}\label{eq:Qdef}
\Gamma_l = \begin{pmatrix} 1 &\   \mu_l \\ \mu_l &\   -1 \end{pmatrix},\quad
Q_l = \begin{pmatrix} \frac{1+\delta_l}{\sqrt{2\delta_l(1+\delta_l)}}  & \  -\frac{\mu_l}{\sqrt{2\delta_l(1+\delta_l)}} \\ \frac{\mu_l}{\sqrt{2\delta_l(1+\delta_l)}} &\    \frac{1+\delta_l}{\sqrt{2\delta_l(1+\delta_l)}} \end{pmatrix},\quad
D_l = \begin{pmatrix} \delta_l &\ 0 \\ 0 &\  -\delta_l \end{pmatrix},
\end{equation}
and $G(x_j) = V(x_j) I_2 - A_1(x_j) \sigma_1 = P \Lambda_j P^T$ with  $\Lambda_j = \textrm{diag}(\Lambda_{j, +},\Lambda_{j, -})$ and $\Lambda_{j, \pm} = V(x_j) \pm A_1(x_j)$, $P = I_2$ if $A_1(x_j) = 0$ and otherwise
\begin{equation*}
P = \begin{pmatrix} \frac{1}{\sqrt{2}} \ & \frac{1}{\sqrt{2}} \\ -\frac{1}{\sqrt{2}} \ &\frac{1}{\sqrt{2}}\end{pmatrix}.
\end{equation*}

\begin{remark}	
If the electromagnetic potential $V$ and/or  $A_1$ are time-dependent, the time-ordering technique\cite{Yin} should be applied when we implement \eqref{eq:S2}, as the operator $\bT+\eps\bV$ is no longer commutable for different time coordinates $t_1\neq t_2$.
\end{remark}

\section{Uniform error bounds} \label{sec:unibds}
In this section, we prove the uniform error bounds for the second-order time-splitting method in propagating the Dirac equation with small potentials in the long-time regime up to $T_\eps = T/\eps$ for any given $T>0$. We will start with the results for the semi-discretized scheme, and then extend it to the full-discretization.

\subsection{For semi-discretization}
Suppose there exists a positive integer $m \geq 2$, such that for the potentials, we have
\begin{equation*}
\textrm{(A)} \qquad\qquad  V(x) \in {W^{m, \infty}_{\rm per}}(\Omega),\quad A_1(x) \in {W^{m, \infty}_{\rm per}}(\Omega),
\end{equation*}
where $W^{m, \infty}_{\rm per}(\Omega):= \{u|u\in W^{m, \infty}(\Omega), \,\partial^l_xu(a) = \partial^l_x u(b), \, l=0,\, \ldots, \,m-1\}$.
In addition, we assume that the exact solution $\Phi : = \Phi(t, x)$ of the Dirac equation \eqref{eq:Dirac_1D} up to the long-time $T_\eps = T/\eps$ satisfies
\begin{equation*}
\textrm{(B)} \qquad \qquad \|\Phi\|_{L^{\infty}([0, T_\eps]; (H_{\rm per}^m(\Omega))^2)}	\lesssim 1,
\end{equation*}
where similarly, $H^{m}_{\rm per}(\Omega):= \{u|u\in H^{m}(\Omega), \,\partial^l_xu(a) = \partial^l_xu(b), \, l=0,\, \ldots, \,m-1\}$, with the equivalent $H^m$-norm on $H^m_{\rm per}(\Omega)$ given as
\begin{equation}
\left\|\phi\right\|_{H^m} = \left(\sum\limits_{l \in \mathbb{Z}}(1+\mu_l^{2})^m|\widehat{\phi}_l|^2\right)^{1/2}.
\end{equation}
Taking into account the assumptions (A) and (B), we can easily derive
\begin{equation}
	\|\partial_{t}\Phi\|_{L^{\infty}([0, T_\eps]; (H_{\rm per}^{m-1}(\Omega))^2)} \lesssim 1,
\end{equation}
from the Dirac equation \eqref{eq:Dirac_21}.
By taking the semi-discretized second-order time-splitting given by \eqref{eq:S2} for the Dirac equation \eqref{eq:Dirac1d} with the operators $\bT$ and $\bV$ defined in \eqref{eq:operators}, we have the following error estimate.
\begin{theorem}
Let $\Phi^{[n]}(x)$ be the numerical approximation obtained from the semi-discretized second-order time-splitting \eqref{eq:S2} for the Dirac equation \eqref{eq:Dirac1d}, then under the assumptions (A) and (B), for any $0 < \varepsilon \leq 1$, we have the uniform error estimate
\begin{equation}
\|\Phi(t_n, x) - \Phi^{[n]}(x)\|_{L^2} \leq  C_1\eps t_n \tau^2\leq C_1T\tau^2, \quad 0 \leq n \leq \frac{T/\varepsilon}{\tau},
\label{eq:TSFP_error1}	
\end{equation}
where $C_1$ is a positive constant independent of $h, \tau, n$ and $\varepsilon$.
\label{thm:S2}
\end{theorem}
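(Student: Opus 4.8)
The plan is to establish the local truncation error of a single Strang step at order $\tau^3$ and then to propagate these local errors using the stability (isometry) of the splitting operator, paying careful attention to where the factor $\eps$ enters so that the final bound reads $C_1\eps t_n\tau^2$ rather than merely $C_1 t_n\tau^2$. First I would record that each of the three exponential factors in $\mathcal{S}_\tau = e^{\frac{\tau}{2}\bT}e^{\eps\tau\bV}e^{\frac{\tau}{2}\bT}$ is an $L^2$-isometry: $e^{\frac{\tau}{2}\bT}$ is unitary on $L^2$ since $\bT = -\sigma_1\partial_x - i\sigma_3$ is skew-adjoint (the Pauli matrices are Hermitian, $\partial_x$ is skew-adjoint under periodic boundary conditions, and the extra $i$ makes $-i\sigma_3$ skew-adjoint), and $e^{\eps\tau\bV}$ is the pointwise multiplication by the unitary matrix $e^{-i\eps\tau(V I_2 - A_1\sigma_1)}$. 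Hence $\mathcal{S}_\tau$ is an $L^2$-isometry, and so are all its powers.

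Next I would bound the local error $\mathcal{E}^n(x) := \Phi(t_{n+1},x) - \mathcal{S}_\tau(\Phi(t_n,x)) = \left(e^{\tau(\bT+\eps\bV)} - e^{\frac{\tau}{2}\bT}e^{\eps\tau\bV}e^{\frac{\tau}{2}\bT}\right)\Phi(t_n,x)$. The standard Strang-splitting expansion (via the variation-of-constants / Duhamel formula applied twice, or a Taylor expansion of both flows) gives that the leading term of the discrepancy involves the iterated commutators $[\bT,[\bT,\eps\bV]]$ and $[\eps\bV,[\eps\bV,\bT]]$ multiplied by $\tau^3$; symbolically,
\begin{equation*}
e^{\tau(\bT+\eps\bV)} - e^{\frac{\tau}{2}\bT}e^{\eps\tau\bV}e^{\frac{\tau}{2}\bT} = \tau^3\int \cdots\, \bigl(c_1[\bT,[\bT,\eps\bV]] + c_2[\eps\bV,[\eps\bV,\bT]]\bigr)\, e^{s(\bT+\eps\bV)}\,(\cdot)\, + \text{h.o.t.}
\end{equation*}
Since every commutator carries at least one factor of $\eps\bV$, each term is $O(\eps)$: $[\bT,[\bT,\eps\bV]] = \eps[\bT,[\bT,\bV]]$ and $[\eps\bV,[\eps\bV,\bT]] = \eps^2[\bV,[\bV,\bT]]$, so the whole bracket is $\eps$ times something bounded. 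The operators $[\bT,[\bT,\bV]]$ and $[\bV,[\bV,\bT]]$ are differential operators of order at most $2$ with coefficients built from $V, A_1$ and their derivatives up to order $2$; by assumption (A) these coefficients are in $W^{m,\infty}\subset W^{2,\infty}$, and by assumption (B) (together with the consequence $\|\partial_t\Phi\|_{L^\infty(H^{m-1})}\lesssim 1$, which controls $\|(\bT+\eps\bV)^k\Phi\|$ for small $k$) the result of applying these operators to $\Phi(t_n,\cdot)$ and then flowing is bounded in $L^2$ uniformly in $n$ and $\eps$. Therefore $\|\mathcal{E}^n\|_{L^2} \le C_1\eps\tau^3$ with $C_1$ independent of $h,\tau,n,\eps$.

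Finally I would assemble the global error by the usual telescoping argument. Writing $e^n(x) := \Phi(t_n,x) - \Phi^{[n]}(x)$ and using $\Phi^{[n+1]} = \mathcal{S}_\tau\Phi^{[n]}$,
\begin{equation*}
e^{n+1} = \Phi(t_{n+1},\cdot) - \mathcal{S}_\tau\Phi^{[n]} = \bigl(\Phi(t_{n+1},\cdot) - \mathcal{S}_\tau\Phi(t_n,\cdot)\bigr) + \mathcal{S}_\tau\bigl(\Phi(t_n,\cdot) - \Phi^{[n]}\bigr) = \mathcal{E}^n + \mathcal{S}_\tau e^n,
\end{equation*}
so that, using the $L^2$-isometry of $\mathcal{S}_\tau$, $\|e^{n+1}\|_{L^2} \le \|e^n\|_{L^2} + \|\mathcal{E}^n\|_{L^2} \le \|e^n\|_{L^2} + C_1\eps\tau^3$. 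Since $e^0 = 0$, induction gives $\|e^n\|_{L^2} \le n\,C_1\eps\tau^3 = C_1\eps t_n\tau^2$, and for $n\le (T/\eps)/\tau$ we have $\eps t_n\le T$, hence $\|e^n\|_{L^2}\le C_1 T\tau^2$, which is \eqref{eq:TSFP_error1}.

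I expect the main obstacle to be the careful bookkeeping in the local-error estimate: one must expand both $e^{\tau(\bT+\eps\bV)}$ and the split product to sufficiently high order, identify explicitly that every surviving term in the $O(\tau^3)$ remainder contains at least one $\bV$ factor (so the remainder is genuinely $O(\eps\tau^3)$ and not just $O(\tau^3)$), and verify that the differential operators appearing in those commutators, when applied to $\Phi$, stay controlled in $L^2$ using only the regularity afforded by (A) and (B). The unboundedness of $\bT$ is not a real difficulty here because the problematic commutators $[\bT,[\bT,\bV]]$ turn out to be differential operators of order $\le 2$ (the top-order $\partial_x^2$ pieces cancel in the double commutator structure, or are simply absorbed by the $H^2$-regularity of $\Phi$), but this cancellation/absorption must be checked rather than asserted.
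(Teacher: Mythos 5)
Your proposal is correct and follows essentially the same route as the paper: an $O(\eps\tau^3)$ local truncation error obtained from the double-commutator/quadrature structure of the Strang splitting (the paper realizes this as the midpoint-rule error of $\eps\int_0^\tau Y(s)\,ds$ with $Y''(s)=e^{(\tau-s)\bT}[\bT,[\bT,\bV]]e^{s\bT}\Phi_0$, and pushes all $\eps^2$ contributions into an $O(\eps^2\tau^3)$ remainder), combined with the $L^2$-isometry of $\mathcal{S}_\tau$ and telescoping. The only minor imprecision is your claim that $[\bT,[\bT,\bV]]$ is of order at most $2$: since $[\sigma_1,\bV]=0$, the inner commutator $[\bT,\bV]$ is a multiplication operator and the double commutator is in fact of order at most $1$, so only $H^1$-regularity of $\Phi$ and $W^{2,\infty}$-bounds on the potentials are needed, exactly as in the paper's estimate.
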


\begin{proof}
We notice that $\bT$ generates a unitary group in $(H^k_{\rm per}(\Omega))^2$ $(k \geq 0)$. Denote the exact solution flow $\Phi(t_n) \to \Phi(t_{n+1})$  as
\begin{equation}
\Phi(t_{n+1}) = \mathcal{S}_{e, \tau}(\Phi(t_n)),\quad 0 \leq n \leq \frac{T/\varepsilon}{\tau},
\end{equation}
where we take $\Phi(t_n):=\Phi(t_n, x)$ for simplicity.

In order to prove the convergence, we adopt the approach via formal Lie calculus introduced in Ref.~\refcite{Lubich} and split the proof into the following two steps.

\noindent
\textbf{Step 1} (Bounds for local truncation error). We begin with the local truncation error, i.e., to estimate the error generated by one time step computed via \eqref{eq:S2}. By using Taylor expansion for $e^{\eps\tau \bV}$, we have
\begin{equation*}
\mathcal{S}_{\tau}(\Phi_0) =  e^{\tau{\bT}}\Phi_0 + \eps \tau e^{\frac{\tau{\bT}}{2}}{\bV} e^{\frac{\tau{\bT}}{2}}\Phi_0 + \eps^2 \tau^2\int^1_0 (1-\theta) e^{\frac{\tau{\bT}}{2}}e^{\eps\theta\tau{\bV}}{\bV}^2 e^{\frac{\tau{\bT}}{2}}\Phi_0 d\theta.
\end{equation*}
By Duhamel's principle, we can write
\begin{align*}
\mathcal{S}_{e, \tau}(\Phi_0) = & \ e^{\tau{\bT}}\Phi_0 + \eps\int^\tau_0 e^{(\tau - s) \bT}{\bV} e^{s\bT}\Phi_0 ds \\
 & +\eps^2 \int^{\tau}_0\int^s_0 e^{(\tau-s)\bT} {\bV} e^{(s-w) \bT}{\bV} \Phi(w)d w d s.
\end{align*}
Denote
\begin{equation}
Y(s) = e^{(\tau-s) \bT}{\bV} e^{s\bT}\Phi_0,\quad B(s, w) = e^{(\tau-s) \bT}{\bV} e^{(s-w)\bT}{\bV} e^{w\bT} \Phi_0,
\label{eq:YB}
\end{equation}
then the local truncation error can be written as
\begin{align*}
\mathcal{S}_{\tau}(\Phi_0) - \mathcal{S}_{e, \tau}(\Phi_0) =&\ \eps\tau Y\left(\frac{\tau}{2}\right) - \eps\int^{\tau}_0 Y(s) ds  + \frac{\varepsilon^2\tau^2}{2}B\left(\frac{\tau}{2}, \frac{\tau}{2}\right) \\
& - \eps^2\int^{\tau}_0\int^s_0 B(s, w) ds d w + \eps^2 R_1 + \eps^2 R_2,	
\end{align*}
with
\begin{equation*}
\begin{split}
& R_1 = \tau^2\int^1_0(1-\theta) e^{\frac{\tau{\bT}}{2}}e^{\eps\theta\tau{\bV}} {\bV}^2 e^{\frac{\tau{\bT}}{2}}\Phi_0 d \theta - \frac{\tau^2}{2}B\left(\frac{\tau}{2}, \frac{\tau}{2}\right), \\
& R_2 = -\int^{\tau}_0 \int^s_0 e^{(\tau-s) \bT} {\bV} e^{(s-w)\bT} {\bV}\Phi(w) - B(s, w) d w d s.
\end{split}	
\end{equation*}
It is easy to check that
\begin{align*}
\|R_1\|_{L^2} & \lesssim \tau^2 \max_{\theta \in (0, 1)} \{\|\partial_{\theta\theta}((1-\theta)e^{\frac{\tau{\bT}}{2}}e^{\eps\theta\tau{\bV}} {\bV}^2 e^{\frac{\tau{\bT}}{2}}\Phi_0)\|_{L^2}\} \\
 & \lesssim \eps \tau^3 \left(\| {\bV}^3\Phi_0\|_{L^2} + \varepsilon\tau \| {\bV}^4\Phi_0\|_{L^2}\right) \lesssim \varepsilon \tau^3.
\end{align*}
In view of the properties of $\bT$ and $B(s, w)$, the quadrature rule implies
\begin{align*}
\|R_2\|_{L^2} & \lesssim \tau^3 \max_{s, w \in (0, \tau)}\{\|e^{(\tau-s){\bT}}{\bV} e^{(s-w)\bT} {\bV} \partial_w \Phi(w)\|_{L^2}\} \nonumber \\
& \lesssim \tau^3 \|\partial_s \Phi(\cdot)\|_{L^{\infty}([0, \tau];({L^2})^2)} \lesssim \tau^3,
\end{align*}
and
\begin{equation*}
\left\|\frac{\tau^2}{2}B\left(\frac{\tau}{2}, \frac{\tau}{2}\right) - \int^{\tau}_0\int^s_0 B(s, w) d w d s\right\|_{L^2}	\lesssim \tau^3 \max_{0 \leq w \leq s \leq \tau}(\|\partial_s B\|_{L^2} + \|\partial_w B\|_{L^2}) \lesssim \tau^3.
\end{equation*}
Finally, we estimate the last term,  which contains the major part of the local error
\begin{equation}
\mathcal{F}(\Phi_0):= \eps\tau {Y}(\frac{\tau}{2}) - \varepsilon\int^{\tau}_0 Y(s) d s = \varepsilon\tau^3\int^1_0 \text{ker}(\theta) {Y}''(\theta \tau) d\theta,
\label{eq:malF}
\end{equation}
where $\text{ker}(\theta)$ is the Peano kernel for midpoint rule. In addition, we have
\begin{equation*}
Y''(s) = e^{(\tau-s)\bT} [{\bT}, [\bT, {\bV}]]e^{s\bT}\Phi_0.
\end{equation*}
For the double commutator $ [{\bT}, [{\bT}, {\bV}]]$, we have
\begin{equation*}
\left\|[{\bT}, [\bT, {\bV}]]\Psi\right\|_{L^2}	\lesssim \left(\|V(\cdot)\|_{W^{2, \infty}} + \|A_1(\cdot)\|_{W^{2, \infty}}\right) \|\Psi\|_{H^1}.
\end{equation*}
Combining all the results above, we find the one step local error as
\begin{equation}
\mathcal{S}_{\tau}(\Phi_0) - \mathcal{S}_{e, \tau}(\Phi_0) = \mathcal{F}(\Phi_0) + \mathcal{R}^0,	
\label{eq:diff1}
\end{equation}
where $\|\mathcal{R}^0\|_{L^2} \lesssim \eps^2 \tau^3$ and
\begin{equation}
\mathcal{F}(\Phi_0) = \eps\tau^3\int^1_0 \rm{ker}(\theta)e^{\tau(1-\theta){\bT}}[{\bT}, [{\bT},{\bV}]]	e^{\tau\theta {\bT}}\Phi_0 d \theta.
\end{equation}
Define the local truncation error at $t_n$ for $0 \leq n \leq \frac{T/\varepsilon}{\tau}-1$ as
\begin{equation}\label{eq:CalE}
\mathcal{E}^n(x) = \mathcal{S}_{\tau}(\Phi(t_{n}, x)) - \mathcal{S}_{e, \tau}(\Phi(t_{n}, x)),\quad a \leq x \leq b,	
\end{equation}
then from the above computation, we can get
\begin{equation}
\mathcal{E}^n(x) = \mathcal{F}(\Phi(t_{n})) + \mathcal{R}^n, \quad a \leq x \leq b,\quad 0 \leq n \leq \frac{T/\varepsilon}{\tau}-1,	
\label{eq:E_local}
\end{equation}
where for $0 \leq n \leq \frac{T/\varepsilon}{\tau}-1$,
\begin{equation}
\|\mathcal{F}(\Phi(t_{n}))\|_{L^2} \lesssim \varepsilon \tau^3 \|\Phi(t_{n})\|_{H^1}, \quad \|\mathcal{R}^n\|_{L^2} \lesssim \varepsilon^2\tau^3.
\label{eq:E_local2}
\end{equation}

\noindent
\textbf{Step 2} (Bounds for the global error).  We are going to prove the error bound \eqref{eq:TSFP_error1}. Denote ${\bf e}^{[n]}(x)= \Phi^{[n]} - \Phi(t_n) $, then $\|{\bf e}^{[0]}(x)\|_{L^2} =0$ by definition.  For $0 \leq n \leq \frac{T/\varepsilon}{\tau}-1$, we have
\begin{equation}
{\bf e}^{[n+1]} =  \mathcal{S}_{\tau}(\Phi^{[n]}) -  \mathcal{S}_{\tau}(\Phi(t_n)) +  \mathcal{S}_{\tau}(\Phi(t_n)) -\mathcal{S}_{e, \tau}(\Phi(t_n)).
\label{eq:semi_grow}
\end{equation}
By the error bound \eqref{eq:E_local2} for the local truncation error, we obtain for $0 \leq n \leq \frac{T/\varepsilon}{\tau}-1$,
\begin{equation}
\left\|{\bf e}^{[n+1]}\right\|_{L^2} \leq \left\|{\bf e}^{[n]}\right\|_{L^2} + C_1 \eps \tau^3,
\label{eq:eg}
\end{equation}
where $C_1>0$ is a constant and independent of $h, n, \tau$ and $\varepsilon$.
Then it is straightforward to derive
\begin{equation}
    \left\|{\bf e}^{[n+1]}\right\|_{L^2} \leq \left\|{\bf e}^{[0]}\right\|_{L^2} + C_1(n+1)\eps\tau^3 \leq C_1\eps t_{n+1}\tau^2,
\end{equation}
which completes the proof of Theorem \ref{thm:S2}.
\end{proof}

\begin{remark}
According to Theorem \ref{thm:S2}, the uniform error bound on the time-splitting method for the Dirac equation  grows linearly with respect to $t$. In fact, given an accuracy bound $\delta_0 > 0$, the time (for simplicity, assume $\eps=1$ here) for the second-order splitting method to violate the accuracy requirement  $\delta_0$ is $O(\delta_0/\tau^2)$. The results can be extended to other time-splitting methods. For the first-order Lie-Trotter splitting and fourth-order compact splitting or partitioned Runge-Kutta splitting (PRK4)\cite{BM,Trotter},   the times are at $O(\delta_0/\tau)$ and $O(\delta_0/\tau^4)$, respectively. In other words, higher order splitting method performs much better in the long-time simulation not only regarding the higher accuracy but also longer simulation time to produce accurate solutions. In addition, extensions to 2D/3D are straightforward.
\end{remark}

\subsection{For full-discretization}
For the full-discretization given in \eqref{eq:TSFP} by the second-order time-splitting method for the Dirac equation \eqref{eq:Dirac1d}, we could further derive the following uniform error estimate.

\begin{theorem}
Let $\Phi^n$ be the approximation obtained from the TSFP \eqref{eq:TSFP} for the Dirac equation \eqref{eq:Dirac1d}. Under the assumptions (A) and (B), for any $0 < \varepsilon \leq 1$, we have
\begin{equation}
\|\Phi(t_n, x) - I_M\Phi^n\|_{L^2} \leq C(t_n)\left( h^m + \tau^2\right),
\qquad   0 \leq n \leq \frac{T/\varepsilon}{\tau},
\label{eq:TSFP_error}	
\end{equation}
where $C(t)=C_0 + C_1 \eps t\le C_0 + C_1 T$ with
$C_0$ and $C_1$ two positive constants independent of $h, \tau, n$ and $\varepsilon$.
\label{thm:TSFP}
\end{theorem}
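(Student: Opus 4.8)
\emph{Proof proposal.} The plan is to interpolate between the temporal estimate of Theorem~\ref{thm:S2} and the spatial resolution of the Fourier pseudospectral discretization, using the Fourier projection of the semi-discrete solution as the bridging quantity. Let $\Phi^{[n]}$ denote the semi-discrete Strang approximation from \eqref{eq:S2}, and identify the grid vector $\Phi^n$ with its trigonometric interpolant $\Phi_M^n := I_M\Phi^n \in Y_M$; then split
\[
\|\Phi(t_n)-I_M\Phi^n\|_{L^2}\le \|\Phi(t_n)-\Phi^{[n]}\|_{L^2}+\|\Phi^{[n]}-P_M\Phi^{[n]}\|_{L^2}+\|P_M\Phi^{[n]}-\Phi_M^n\|_{L^2}.
\]
The first term is $\le C_1\eps t_n\tau^2$ by Theorem~\ref{thm:S2}. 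For the second I would first record an $H^m$-stability bound for the semi-discrete flow: since $e^{\frac{\tau}{2}\bT}$ is unitary on every $H^k_{\rm per}$ (it is Fourier-diagonal with unitary symbol $e^{-i\tau\Gamma_l/2}$) and $e^{\eps\tau\bV}=e^{-i\eps\tau G(x)}$ is multiplication by a unitary matrix with $G\in W^{m,\infty}_{\rm per}$, one has $\|e^{-i\eps\tau G}\psi\|_{H^m}\le(1+C\eps\tau)\|\psi\|_{H^m}$, hence $\|\Phi^{[n]}\|_{H^m}\le e^{CT}\|\Phi_0\|_{H^m}\lesssim1$ for $n\le T/(\eps\tau)$; the standard projection estimate then gives $\|\Phi^{[n]}-P_M\Phi^{[n]}\|_{L^2}\lesssim h^m$.

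The heart of the argument is a one-step recursion for $d_n:=\|P_M\Phi^{[n]}-\Phi_M^n\|_{L^2}$. Two structural facts drive it: $e^{\frac{\tau}{2}\bT}$ commutes with $P_M$ and with $I-P_M$; and on $Y_M$ the pseudospectral propagator $\mathcal S_\tau^M=e^{\frac{\tau}{2}\bT}I_M e^{-i\eps\tau G}e^{\frac{\tau}{2}\bT}$ is an exact $L^2$-isometry, because $e^{-i\eps\tau G(x_j)}$ is unitary at each grid point and the DFT is a discrete isometry between grid values and Fourier coefficients. Writing $\mathcal S_\tau=e^{\frac{\tau}{2}\bT}e^{-i\eps\tau G}e^{\frac{\tau}{2}\bT}$ for the continuous-space Strang step and using $P_M\Phi^{[n+1]}=P_M\mathcal S_\tau\Phi^{[n]}$, I would decompose
\[
P_M\Phi^{[n+1]}-\Phi_M^{n+1}=\underbrace{P_M\mathcal S_\tau(I-P_M)\Phi^{[n]}}_{\mathrm{(I)}}+\underbrace{(P_M\mathcal S_\tau-\mathcal S_\tau^M)P_M\Phi^{[n]}}_{\mathrm{(II)}}+\mathcal S_\tau^M\bigl(P_M\Phi^{[n]}-\Phi_M^n\bigr).
\]
The last term contributes exactly $d_n$ by the isometry. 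In (I), since $P_M e^{\frac{\tau}{2}\bT}(I-P_M)=0$ the identity part of $e^{-i\eps\tau G}$ drops out, leaving $e^{\frac{\tau}{2}\bT}P_M(e^{-i\eps\tau G}-I)(I-P_M)e^{\frac{\tau}{2}\bT}\Phi^{[n]}$, of $L^2$-norm $\lesssim\eps\tau\,\|(I-P_M)\Phi^{[n]}\|_{L^2}\lesssim\eps\tau h^m$ since $\|e^{-i\eps\tau G}-I\|_{L^2\to L^2}\lesssim\eps\tau$. In (II) everything cancels except the middle factor $(P_M-I_M)\bigl(e^{-i\eps\tau G}g\bigr)$ with $g:=P_M e^{\frac{\tau}{2}\bT}\Phi^{[n]}\in Y_M$; since $(P_M-I_M)g=0$ and $\|(e^{-i\eps\tau G}-I)g\|_{H^m}\lesssim\eps\tau\|g\|_{H^m}\lesssim\eps\tau$ (here $G\in W^{m,\infty}_{\rm per}$ and the $H^m$-bound on $g$ are used), the aliasing estimate $\|(P_M-I_M)f\|_{L^2}\lesssim h^m\|f\|_{H^m}$ gives $\lesssim\eps\tau h^m$. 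Thus $d_{n+1}\le d_n+C\eps\tau h^m$ with $d_0=\|P_M\Phi_0-I_M\Phi_0\|_{L^2}\lesssim h^m$, so $d_n\lesssim(1+\eps t_n)h^m$.

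Adding the three pieces yields $\|\Phi(t_n)-I_M\Phi^n\|_{L^2}\lesssim\eps t_n\tau^2+(1+\eps t_n)h^m\le(C_0+C_1\eps t_n)(h^m+\tau^2)$, as claimed. I expect the main obstacle to be precisely the bookkeeping in the recursion for $d_n$: a naive bound on (I) and (II) that does not exploit the commutation of the free flow with $P_M$ and the $O(\eps\tau)$ smallness of $e^{-i\eps\tau G}-I$ would give a spatial local error of size $h^m$ per step, hence a useless accumulated bound $O(h^m/(\eps\tau))$; squeezing out the extra $\eps\tau$ factor---so that the spatial error accumulates only to $O(\eps t_n h^m)\le O(Th^m)$---is the crux, alongside verifying the $H^m$-stability bounds and the exact $L^2$-isometry of $\mathcal S_\tau^M$ on $Y_M$.
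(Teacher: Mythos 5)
Your proposal is correct, but it routes the argument differently from the paper's proof of this theorem. The paper never introduces the semi-discrete solution $\Phi^{[n]}$ here: it compares $I_M\Phi^n$ directly with the projected \emph{exact} solution, setting ${\bf e}^n = I_M\Phi^n - P_M\Phi(t_n)$, and builds the one-step recursion by reusing the local truncation error $\mathcal{E}^n=\mathcal{S}_\tau(\Phi(t_n))-\mathcal{S}_{e,\tau}(\Phi(t_n))$ from the semi-discrete analysis, so that temporal ($\eps\tau^3$ per step) and spatial ($\eps\tau h^m$ per step) contributions enter the same recursion. You instead pass through $\Phi^{[n]}$ and $P_M\Phi^{[n]}$, invoke Theorem~\ref{thm:S2} as a black box for the temporal part, and run a separate recursion for $d_n=\|P_M\Phi^{[n]}-I_M\Phi^n\|_{L^2}$ — which is in fact exactly the strategy the paper uses later for the \emph{improved} full-discretization bound (Theorem~\ref{thm:improved_full}, with $\widetilde{\bf e}^n=P_M\Phi^{[n]}-I_M\Phi^n$ and $\|\widetilde{\bf e}^{n+1}\|_{L^2}\le\|\widetilde{\bf e}^n\|_{L^2}+C\eps\tau(h^m+\|\widetilde{\bf e}^n\|_{L^2})$). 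The price of your route is the extra $H^m$-stability bound $\|\Phi^{[n]}\|_{H^m}\lesssim 1$ for the semi-discrete flow, which the paper's route avoids by leaning only on assumption (B); you supply it correctly via $\|e^{-i\eps\tau G}\psi\|_{H^m}\le(1+C\eps\tau)\|\psi\|_{H^m}$ accumulated over $O(1/(\eps\tau))$ steps. What your route buys is a cleaner modular separation of temporal and spatial errors, and a more explicit treatment of the one delicate point: your terms (I) and (II) isolate precisely why the per-step spatial error carries the factor $\eps\tau$ (the identity part of $e^{-i\eps\tau G}$ is annihilated by $P_M(I-P_M)$ and by $(P_M-I_M)$ on $Y_M$, respectively), whereas the paper compresses this into the single stated bound $\|P_M\Phi^{\langle2\rangle}-I_M\Phi^{\langle2\rangle}\|_{L^2}\le C_4\eps\tau h^m$. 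Both proofs rest on the same two mechanisms — unitarity/isometry of the split propagators so errors do not amplify, and the $O(\eps\tau)$ smallness of $e^{\eps\tau\bV}-1$ so that $h^m$ accumulates only to $O(\eps t_n h^m)$ — and your identification of the latter as the crux is exactly right.
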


\noindent
\emph{Proof.} Noticing that
\begin{equation}
I_M\Phi^n - \Phi(t_n) = I_M\Phi^n - P_M\Phi(t_n) + P_M\Phi(t_n) - \Phi(t_n),
\end{equation}
under the assumption (B), we get from the standard Fourier projection properties\cite{ST}
\begin{equation}
\left\|I_M\Phi^n - \Phi(t_n)\right\|_{L^2} \leq \left\|I_M\Phi^n - P_M\Phi(t_n)\right\|_{L^2} + C_2 h^m, \ 0 \leq n \leq \frac{T/\eps}{\tau}.
\end{equation}
Thus, it suffices to consider the error function ${\bf e}^n \in Y_M$ at $t_n$ as
\begin{equation}
{\bf e}^n := {\bf e}^n(x) = I_M\Phi^n - P_M\Phi(t_n),	\quad 0 \leq n \leq \frac{T/\eps}{\tau}.
\end{equation}
Since $\Phi^0_j = \Phi_0(x_j)$, it is obvious from (B) that $\|{\bf e}^0\|_{L^2} \leq C_3 h^m$.
From the local truncation error \eqref{eq:CalE}, we have the error equation for ${\bf e}^n$ ($0 \leq n \leq \frac{T/\varepsilon}{\tau}-1$),
\begin{align}
{\bf e}^{n+1} = I_M\Phi^{n+1} - P_M\mathcal{S}_{\tau}(\Phi(t_n)) + P_M(\mathcal{E}^{n}).
\label{eq:error_decomp}
\end{align}
Recall the semi-discretization \eqref{eq:S2} and the full-discretization \eqref{eq:TSFP}, we have
\begin{equation*}
\begin{split}
& I_M\Phi^{n+1} = e^{\frac{\tau {\bT}}{2}}(I_M\Phi^{(2)}), \ I_M\Phi^{(2)} = I_M\left(e^{\eps\tau\bV}\Phi^{(1)}\right), \ I_M\Phi^{(1)} = e^{\frac{\tau {\bT}}{2}}(I_M\Phi^n),\\
& P_M(\mathcal{S}_{\tau}(\Phi(t_n))) = e^{\frac{\tau {\bT}}{2}}(P_M\Phi^{\langle2\rangle}), \ \Phi^{\langle2\rangle} := e^{\eps\tau\bV}\Phi^{\langle1\rangle}, \ \Phi^{\langle1\rangle} := e^{\frac{\tau {\bT}}{2}}\Phi(t_n).
\end{split}	
\end{equation*}
In view of the facts that $I_M$ and $P_M$ are identical on $Y_M$ and $e^{\tau {\bT}/2}$ preserves the $H^k$-norm ($k \geq 0$), using the Taylor expansion $e^{\eps \tau {\bV}} = 1+\eps\tau{\bV}\int^1_0 e^{\eps\tau\theta {\bV}} d\theta$ and assumptions (A) and (B), we have
\begin{align}
&\left\|I_M\Phi^{n+1} - P_M(\mathcal{S}_{\tau}(\Phi(t_n)))\right\|_{L^2}	= \left\|I_M\Phi^{(2)} - P_M\Phi^{\langle2\rangle}\right\|_{L^2},\\
&\left\|P_M\Phi^{\langle2\rangle}- I_M\Phi^{\langle2\rangle}\right\|_{L^2} \leq C_4 \eps \tau h^m.
\end{align}
In addition, noticing $\bV = -i(V(x)I_2 - A_1(x)\sigma_1)$, by direct computation and Parseval's identity, we have
\begin{align}
\left\|I_M\Phi^{(2)}- I_M\Phi^{\langle2\rangle}\right\|_{L^2} & = \sqrt{h\sum_{j=0}^{M-1} \left\vert\Phi^{(2)}_j - \Phi^{\langle2\rangle}(x_j)\right\vert^2}= \sqrt{h\sum_{j=0}^{M-1} \left\vert\Phi^{(1)}_j - \Phi^{\langle1\rangle}(x_j)\right\vert^2} \nn \\
& = \left\|I_M \Phi^{(1)} - I_M \Phi^{\langle 1\rangle}\right\|_{L^2}= \left\|I_M\Phi^n - P_M\Phi(t_n)\right\|_{L^2} \nn\\
& = \left\|{\bf e}^n\right\|_{L^2}.
\end{align}
From \eqref{eq:E_local}, \eqref{eq:E_local2} and the assumption (B), it is clear that there exists $C_5>0$ such that $\|\mathcal{E}^n\|_{L^2}\leq C_5\eps\tau^3$ for $0 \leq n \leq \frac{T/\eps}{\tau} - 1$. Taking the $L^2$-norm on both sides of \eqref{eq:error_decomp} and combining the above estimates, we obtain for $0 \leq n \leq \frac{T/\eps}{\tau} - 1$,
\begin{align}
\left\|{\bf e}^{n+1}\right\|_{L^2} & \leq \left\|I_M\Phi^{(2)} - P_M\Phi^{\langle2\rangle}\right\|_{L^2} + \left\|\mathcal{E}^{n}\right\|_{L^2} \nn \\
& \leq \left\|I_M\Phi^{(2)}- I_M\Phi^{\langle2\rangle}\right\|_{L^2} +  \left\|P_M\Phi^{\langle2\rangle}- I_M\Phi^{\langle2\rangle}\right\|_{L^2}  + \left\|\mathcal{E}^{n}\right\|_{L^2} \nn \\
&\leq \left\|{\bf e}^n\right\|_{L^2} + C_6\left(\eps\tau h^m + \eps \tau^3\right),
\end{align}
where $C_6 = \max\{C_4, C_5\}$.
Thus, we arrive at
\begin{equation}
\left\|{\bf e}^{n+1}\right\|_{L^2} \leq C_6 \eps t_{n+1}\left(h^m + \tau^2\right) + C_3 h^m, \quad 	0 \leq n \leq \frac{T/\eps}{\tau} - 1,
\end{equation}
which completes the proof of Theorem \ref{thm:TSFP} by taking $C_0 = C_2 + C_3$ and $C_1 = C_6$.
\hfill $\square$

\section{Improved uniform error bounds} \label{sec:imprvbds}
In this section, we establish the improved uniform error bounds for the time-splitting methods applied to the Dirac equation \eqref{eq:Dirac_1D} up to the long-time $T_{\eps}$ under the assumptions (A) and (B).

\subsection{For semi-discretization}
\begin{theorem}
Let $\Phi^{[n]}(x)$ be the numerical approximation obtained from the semi-discretized second-order time-splitting \eqref{eq:S2} for the Dirac equation \eqref{eq:Dirac1d}. Under the assumptions (A) and (B), for $0 < \tau_0 < 1$ sufficiently small and independent of $\eps$, when $0 < \tau < \alpha \frac{\pi(b-a)\tau_0}{\sqrt{\tau_0^2(b-a)^2 + 4\pi^2(1+\tau_0)^2}}$ for a fixed constant $\alpha \in (0, 1)$, we have the following improved uniform error bound for any $\eps\in(0, 1]$
\begin{equation}
\|\Phi(t_n, x) - \Phi^{[n]}\|_{L^2} \lesssim \eps\tau^2 + \tau^m_0, \quad  0 \leq n \leq \frac{T/\varepsilon}{\tau}.
\label{eq:impr_semi}	
\end{equation}
In particular, if the exact solution is smooth, e.g., $\Phi(t, x) \in  L^\infty([0, T_\eps]; (H^{\infty}_{\rm per}(\Omega))^2)$, the last term $\tau_0^m$ decays exponentially fast and could be ignored practically for small enough $\tau_0$, where the improved uniform error bound for sufficiently small $\tau$ will be
\begin{equation}
\|\Phi(t_n, x) - \Phi^{[n]}\|_{L^2} \lesssim  \eps\tau^2, \quad  0 \leq n \leq \frac{T/\varepsilon}{\tau}.
\end{equation}
\label{thm:improved_semi}	
\end{theorem}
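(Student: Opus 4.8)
The plan is to follow the \emph{regularity compensation oscillation} (RCO) strategy of Refs.~\refcite{BCF1,BCF2}. Since the Strang propagator $\mathcal{S}_\tau = e^{\frac{\tau}{2}\bT}e^{\eps\tau\bV}e^{\frac{\tau}{2}\bT}$ is linear and unitary on $L^2$, the global error telescopes as
\[
\Phi^{[n]} - \Phi(t_n) = \sum_{k=0}^{n-1}\mathcal{S}_\tau^{\,n-1-k}\bigl(\mathcal{E}^k\bigr), \qquad \mathcal{E}^k = \mathcal{F}(\Phi(t_k)) + \mathcal{R}^k,
\]
with $\mathcal{F}$ and $\mathcal{R}^k$ as in the proof of Theorem~\ref{thm:S2}. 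I would first discard the remainder: by \eqref{eq:E_local2} $\|\mathcal{R}^k\|_{L^2}\lesssim\eps^2\tau^3$, so unitarity gives a contribution $\lesssim n\,\eps^2\tau^3\lesssim\eps\tau^2$ on $0\le n\le T/(\eps\tau)$. Everything thus reduces to bounding $\sum_{k=0}^{n-1}\mathcal{S}_\tau^{\,n-1-k}\mathcal{F}(\Phi(t_k))$, where $\mathcal{F}(\Psi)=\eps\tau\,Y(\tau/2)-\eps\int_0^\tau Y(s)\,ds$ is the midpoint defect in \eqref{eq:malF} and $Y(s)=e^{(\tau-s)\bT}\bV e^{s\bT}\Psi$.

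Second, the \emph{regularity} (high-frequency) step. Fix the cut-off $N=\lceil 1/\tau_0\rceil$ and split $\Phi(t_k)=P_N\Phi(t_k)+(I-P_N)\Phi(t_k)$. Since $\bV$ is a bounded multiplication operator and $e^{s\bT}$ is unitary, one has the crude bound $\|\mathcal{F}(\Psi)\|_{L^2}\lesssim\eps\tau\|\Psi\|_{L^2}$; combining it with $\|(I-P_N)\Phi(t_k)\|_{L^2}\lesssim N^{-m}\lesssim\tau_0^m$ from assumption~(B) and with unitarity of $\mathcal{S}_\tau$, the high-frequency part of the sum is $\lesssim n\,\eps\tau\,\tau_0^m\lesssim\tau_0^m$. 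It remains to handle $\sum_{k=0}^{n-1}\mathcal{S}_\tau^{\,n-1-k}\mathcal{F}(P_N\Phi(t_k))$, which is supported on the finitely many modes $|l|\le N$.

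Third, the \emph{oscillation} step, which is the crux. On the modes $|l|\le N$ I would diagonalize $\bT$ by $\Gamma_l=Q_lD_l(Q_l)^T$ with eigenvalues $\pm\delta_l$, $\delta_l=\sqrt{1+\mu_l^2}$, and expand $\mathcal{F}(P_N\Phi(t_k))$ in this basis. Because $\mathcal{F}$ is the midpoint defect of $\int_0^\tau e^{(\tau-s)\bT}\bV e^{s\bT}(\cdot)\,ds$, a direct computation shows each mode-pair $(l,l')$ enters with a factor $\eps\tau\bigl(1-\mathrm{sinc}\bigl(\frac{\tau}{2}(\pm\delta_l\mp\delta_{l'})\bigr)\bigr)$ times bounded matrix coefficients and rigid phases $e^{\mp i\tau\delta_l}$; the key point is that this $\mathrm{sinc}$-factor vanishes identically at the resonances $\delta_l=\delta_{l'}$ and is $O(\tau^2(\delta_l\pm\delta_{l'})^2)$ otherwise. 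Reorganizing the resulting finite double sum over $(k,l')$, the non-resonant frequencies are controlled by the phase-cancellation estimate $\bigl|\sum_{k=0}^{n-1}e^{ik\tau\omega}\bigr|\le|\sin(\tau\omega/2)|^{-1}$, while the resonant ones are annihilated by the $\mathrm{sinc}$-factor; an energy estimate — testing the error recursion against the low-frequency error and using discrete Gronwall — then closes the loop and turns the $O(\tau^2)$ of Theorem~\ref{thm:S2} into $O(\eps\tau^2)$ on the low modes. This is exactly where the hypothesis $\tau<\alpha\,\tau^\ast$, $\tau^\ast=\pi(b-a)\tau_0/\sqrt{\tau_0^2(b-a)^2+4\pi^2(1+\tau_0)^2}$, is used: since $\delta_N\le\pi/\tau^\ast$, it forces $\tau\delta_l\le\tau\delta_N<\alpha\pi<\pi$ for all $|l|\le N$, so every phase $\frac{\tau}{2}(\delta_l\pm\delta_{l'})$ lies in $(-\alpha\pi,\alpha\pi)\setminus\{0\}$, a set on which $|\sin(\cdot)|$ admits a lower bound relative to its argument that is uniform in $\eps$ — the non-resonance condition that makes the cancellation quantitative.

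I expect the low-frequency oscillatory sum to be the main obstacle, for two reasons: neither $\mathcal{S}_\tau$ nor the exact flow is diagonal in the Fourier basis (the potential couples modes), so the phase bookkeeping must be done carefully rather than by a naive mode-by-mode argument; and the small-gap contributions $\delta_l-\delta_{l'}$ must be balanced delicately against the vanishing of the $\mathrm{sinc}$-factor, with every constant kept uniform for $\eps\in(0,1]$ and $n$ up to $T/(\eps\tau)$. Once the three pieces are assembled — $O(\eps\tau^2)$ from $\mathcal{R}^k$, $O(\tau_0^m)$ from the high modes, $O(\eps\tau^2)$ from the low modes — one obtains \eqref{eq:impr_semi}; and when $\Phi$ is smooth the cut-off contribution $\tau_0^m$ is exponentially small, giving $\|\Phi(t_n,x)-\Phi^{[n]}\|_{L^2}\lesssim\eps\tau^2$.
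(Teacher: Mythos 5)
Your architecture is the paper's RCO strategy: telescope the local defects, discard $\mathcal{R}^k$ by unitarity, cut off at $|l|\lesssim 1/\tau_0$ and pay $\tau_0^m$ for the high modes, and exploit the $O(\tau^3\omega^2)$ vanishing of the midpoint defect $r=\tau e^{i\tau\omega/2}-\int_0^\tau e^{is\omega}\,ds$ together with the geometric-sum bound $\big|\sum_k e^{it_k\omega}\big|\le|\sin(\tau\omega/2)|^{-1}$, which is exactly what the step-size restriction guarantees. But there is a genuine gap in the low-mode step: the geometric-sum bound applies only when the summand is a rigid phase $e^{it_k\omega}$ times an amplitude essentially independent of $k$, whereas your amplitudes are the Fourier coefficients $\widehat{\Phi}_{l'}(t_k)$, which are \emph{not} slowly varying --- $\Phi$ evolves under the free flow at $O(1)$ speed, so $\widehat{\Phi}_{l'}(t_k)$ itself oscillates with frequency $\pm\delta_{l'}$. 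The paper's device is the twisted variable $\Psi(t)=e^{-t\bT}\Phi(t)$, which satisfies $\partial_t\Psi=O(\eps)$ and hence $\|\Psi(t_{k+1})-\Psi(t_k)\|_{H^m}\lesssim\eps\tau$; after twisting, the phases $e^{it_k\delta^{\nu_1,\nu_2}_{l,l_2}}$ are exactly rigid, the amplitudes $c_k=\Pi_l^{\nu_1}\widehat{\bV}_{l_1}\Pi_{l_2}^{\nu_2}\widehat{\Psi}_{l_2}(t_k)$ change by only $O(\eps\tau)$ per step, and an Abel summation gives $\big|\sum_k\lambda_k\big|\lesssim |r|\,|S|\,\big(|c_n|+\sum_k|c_k-c_{k+1}|\big)\lesssim\tau^2|\omega|(1+n\eps\tau)\lesssim\tau^2|\omega|$; with the overall $\eps$ prefactor of $\mathcal{F}$ this is precisely where $O(\eps\tau^2)$ comes from. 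Your sketch attributes the gain of $\eps$ to ``an energy estimate and discrete Gronwall,'' but those only control the error-feedback term $\eps\tau\sum_k\|{\bf e}^{[k]}\|_{L^2}$; they cannot extract a factor of $\eps$ from the accumulated defects. Without the twisted variable and the summation by parts, the low-mode sum is only $O(\tau^2)$, i.e., you recover Theorem~\ref{thm:S2} rather than improve it.

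A second, structural point: you propagate the defects with $\mathcal{S}_\tau^{\,n-1-k}$, but the phase cancellation needs the free flow $e^{(n-1-k)\tau\bT}$, and replacing one by the other costs $O(j\eps\tau)=O(1)$ over $j\sim 1/(\eps\tau)$ steps, so it cannot be absorbed as a perturbation of the whole sum. The paper instead writes $\mathcal{S}_\tau{\bf e}^{[n]}=e^{\tau\bT}{\bf e}^{[n]}+W^n$ with $\|W^n\|_{L^2}\lesssim\eps\tau\|{\bf e}^{[n]}\|_{L^2}$ and iterates with $e^{\tau\bT}$, feeding the $W^k$ into the Gronwall sum --- this is where your ``energy estimate plus Gronwall'' belongs. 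With these two repairs (the twisted-variable summation by parts for the defect sum, and the $e^{\tau\bT}$-based recursion with $W^n$ in the Gronwall term), your outline matches the paper's proof.
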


\begin{proof}
From the local truncation error \eqref{eq:CalE} and the error equation \eqref{eq:semi_grow}, we have
\begin{equation}
{\bf e}^{[n+1]} =  \mathcal{S}_{\tau}(\Phi^{[n]}) -  \mathcal{S}_{\tau}(\Phi(t_n)) +  \mathcal{E}^{n} = e^{\tau \bT} {\bf e}^{[n]} + W^n + \mathcal{E}^{n}, \quad n \geq 0,
\label{eq:en1}
\end{equation}
where $W^n := W^n(x)$ ($n = 0, 1, \ldots$) is given by
\begin{equation}
W^n(x) = \eps \tau 	e^{\frac{\tau \bT}{2}}{\bV}\int^1_0 e^{\eps\tau\theta \bV} d\theta \; e^{\frac{\tau \bT}{2}}  {\bf e}^{[n]}.
\end{equation}
Under the assumption (A), we have the following estimate
\begin{equation}
\left\|W^n(x)\right\|_{L^2} \lesssim \eps \tau \left\|{\bf e}^{[n]} \right\|_{L^2}.
\label{eq:Wb}
\end{equation}
Based on \eqref{eq:en1}, we arrive at
\begin{equation}
{\bf e}^{[n+1]} =  e^{(n+1)\tau \bT}	{\bf e}^{[0]} + \sum_{k=0}^{n} e^{(n-k)\tau \bT}\left(W^k + \mathcal{E}^k\right), \quad 0 \leq n \leq  \frac{T/\eps}{\tau}-1.
\end{equation}
Combining \eqref{eq:E_local}, \eqref{eq:E_local2} and \eqref{eq:Wb}, noticing ${\bf e}^{[0]} = {\bm 0}$, we have the estimates for $0 \leq n \leq  \frac{T/\eps}{\tau}-1$,
\begin{equation}
\left\|{\bf e}^{[n+1]}\right\|_{L^2}\lesssim \eps\tau^2 + \eps\tau \sum_{k=0}^{n} \left\|{\bf e}^{[k]}\right\|_{L^2} + \left\|\sum_{k=0}^{n} e^{(n-k)\tau \bT}\mathcal{F}(\Phi(t_k))\right\|_{L^2}.	
\label{eq:erfinial}
\end{equation}
In order to obtain the improved uniform error bounds, we shall employ the {\textbf{regularity compensation oscillation}} (RCO) technique\cite{BCF1,BCF2} to deal with the last term in the RHS of the inequality \eqref{eq:erfinial}.

The key idea is a summation-by-parts procedure combined with spectrum cut-off and phase cancellation. First, we employ the spectral projection on $\Phi(t_k)$ such that only finite Fourier modes of $\Phi(t_k)$ need to be considered and the projection error could be controlled by the regularity of $\Phi(t_k)$. Then, we apply the summation-by parts formula to the low Fourier modes such that the phase could be cancelled for small $\tau$ (the terms of the type $\sum_{k=0}^{n} e^{(n-k)\tau \bT}$) and an extra order of $\eps$ could be gained from the terms like $\mathcal{F}(\Phi(t_k)) - \mathcal{F}(\Phi(t_{k+1}))$.

From the Dirac equation \eqref{eq:Dirac1d} and the assumption (A), we notice that $\partial_t \Phi(t, x) - \bT \Phi(t, x) = O(\eps)$. In order to gain an extra order of $\eps$, it is natural to introduce the `twisted variable' as
\begin{equation}
\Psi(t, x) = e^{-t\bT} \Phi(t, x), \quad t \geq 0,	
\end{equation}
which satisfies the equation
\begin{equation}
\partial_t \Psi(t, x) = \eps e^{-t\bT}\left(\bV e^{t\bT}\Psi(t, x)\right), \quad t \geq 0.
\end{equation}
Noticing $ \bT = -\sigma_1\partial_x - i\sigma_3$, under the assumptions (A) and (B), we can prove that
\begin{equation}\label{eq:Psi_regu}
	\|\Psi\|_{L^{\infty}([0, T_\eps]; (H_{\rm per}^m(\Omega))^2)}\lesssim 1, \quad \|\partial_{t}\Psi\|_{L^{\infty}([0, T_\eps]; (H_{\rm per}^m(\Omega))^2)} \lesssim \eps
\end{equation}
and
\begin{equation}
\left\|\Psi(t_{n+1}) - \Psi(t_n)\right\|_{H^m}	\lesssim \eps \tau, \quad 0 \leq n \leq \frac{T/\eps}{\tau} - 1.
\label{eq:S1twist}
\end{equation}
The RCO technique will be used to force $\partial_t\Psi$ to appear with the gain of order $\eps$ for the summation-by-parts procedure in the last term $\sum_{k=0}^{n} e^{(n-k)\tau \bT}\mathcal{F}(\Phi(t_k))$, where the small $\tau$ is required to control the accumulation of the frequency of the type $ e^{(n-k)\tau \bT}$.

\noindent
{\textbf{Step 1.}} As introduced in Ref.~\refcite{BCF1}, we choose the cut-off parameter $\tau_0 \in (0, 1)$ and $M_0 = 2\lceil 1/\tau_0 \rceil \in \mathbb{Z}^+$ ($\lceil \cdot \rceil$ is the ceiling function) with $1/\tau_0 \leq M_0/2 < 1+ 1/\tau_0$. Under the assumptions (A) and (B), we have the following estimate
\begin{equation}
\left\|P_{M_0}\mathcal{F}(P_{M_0}\Phi(t_n)) - \mathcal{F}(\Phi(t_n))\right\|_{L^2}	\lesssim \eps \tau \tau_0^m.
\end{equation}
Based on the above estimates, \eqref{eq:erfinial} would imply for $0 \leq n \leq T_\eps/\tau-1$,
\begin{equation}
\left\|{\bf e}^{[n+1]}\right\|_{L^2}\lesssim \tau_0^m + \eps\tau^2 + \eps\tau \sum_{k=0}^{n} \left\|{\bf e}^{[k]}\right\|_{L^2} + \left\| \mathcal{L}^n\right\|_{L^2},
\label{eq:matl}
\end{equation}
where
\begin{equation}
\mathcal{L}^n = \sum_{k=0}^{n} e^{-(k+1)\tau \bT}P_{M_0}\mathcal{F}\left(e^{t_k\bT}(P_{M_0}\Psi(t_k))\right).
\end{equation}

\noindent
{\textbf{Step 2.}} Now, we concentrate on $\mathcal{L}^n$, which represents the summation of low Fourier modes. For $l \in \mathcal{T}_{M_0}$, define the index set $\mathcal{I}_l^{M_0}$ associated to $l$ as
\begin{equation}
\mathcal{I}_l^{M_0} = \left\{(l_1,l_2) \ \vert \ l_1+l_2 = l,\ l_1\in\mathbb{Z},\, l_2 \in\mathcal{T}_{M_0}\right\}.
\end{equation}
Following the notations in \eqref{eq:Qdef}, we denote
\begin{equation}
\Pi_l^+= Q_l\,\text{diag}(1,0) (Q_l)^T,\quad \Pi_l^- = Q_l\,\text{diag}(0,1) (Q_l)^T,
\end{equation}
where $\Pi_l^{\pm}$ are the projectors onto the eigenspaces of $\Gamma_l$ corresponding to the eigenvalues $\pm\delta_l$, respectively. Moreover, we have $(\Pi_l^\pm)^T=\Pi_l^\pm$, $\Pi_l^++\Pi_l^-=I_2$, $(\Pi_l^\pm)^2=\Pi_l^\pm$, $\Pi_l^{\pm}\Pi_l^{\mp}={\bf 0}$.
By direct computation, we  have
\begin{equation}
e^{t{\bm T}}P_{M_0}\Psi(t_k)=\sum\limits_{l\in\mathcal{T}_{M_0}}\left(e^{-it\delta_l}\Pi_l^++e^{it\delta_l}\Pi_l^-\right)\widehat{\Psi}_l(t_k)e^{i\mu_l(x-a)}.
\end{equation}
According to the definition of $\mathcal{F}$ in \eqref{eq:malF}, the expansion below follows
\begin{align}
&e^{-(k+1)\tau{\bm T}}P_{M_0}\left(e^{(\tau-s)\bT}\bV e^{(t_k+s)\bT}P_{M_0}\Psi(t_k))\right)\nonumber\\
&=\sum\limits_{l\in\mathcal{T}_{M_0}}\sum\limits_{(l_1,l_2)\in\mathcal{I}_l^{M_0}}\sum\limits_{\nu_j=\pm, j=1,2}\mathcal{G}_{k,l,l_1,l_2}^{\nu_1,\nu_2}(s) e^{i\mu_l(x-a)},
\end{align}
where  $\mathcal{G}_{k,l,l_1,l_2}^{\nu_1,\nu_2}(s)$ is a function of $s$ defined as
\begin{equation}
\mathcal{G}_{k,l,l_1,l_2}^{\nu_1, \nu_2}(s) =
e^{i(t_k+s)\delta^{\nu_1, \nu_2}_{l,l_2}}\Pi_l^{\nu_1}\widehat{\bV}_{l_1}\Pi_{l_2}^{\nu_2} \widehat{\Psi}_{l_2}(t_k),
\label{eq:mGdef}
\end{equation}
with $\delta_{l,l_2}^{\nu_1,\nu_2}=\nu_1 \delta_l-\nu_2 \delta_{l_2}$.
Then, the remainder term $\mathcal{L}^n$ in \eqref{eq:matl} reads
\begin{equation}
\label{eq:reminder-dec}
\mathcal{L}^n = \eps \sum\limits_{k=0}^n
\sum\limits_{l\in\mathcal{T}_{M_0}}\sum\limits_{(l_1,l_2)\in\mathcal{I}_l^{M_0}}\sum\limits_{ \nu_j=\pm, j=1,2} \lambda_{k, l, l_1, l_2}^{\nu_1, \nu_2} e^{i\mu_l(x-a)},
\end{equation}
where the coefficients  $\lambda_{k, l, l_1, l_2}^{\nu_1, \nu_2}$ given by
\begin{align}
\lambda_{k, l, l_1, l_2}^{\nu_1, \nu_2} & = \tau\mathcal{G}_{k,l,l_1,l_2}^{\nu_1, \nu_2}(\tau/2) - \int_0^\tau\mathcal{G}_{k,l,l_1,l_2}^{\nu_1, \nu_2}(s)\,ds \nn \\
&   = r_{l,l_2}^{\nu_1, \nu_2}e^{it_k\delta^{\nu_1, \nu_2}_{l,l_2}}c_{k,l,l_1,l_2}^{\nu_1, \nu_2},
\label{eq:Lambdak}
\end{align}
and
\begin{align}
& c_{k,l,l_1,l_2}^{\nu_1, \nu_2} = 	\Pi_l^{\nu_1}\widehat{\bV}_{l_1}\Pi_{l_2}^{\nu_2} \widehat{\Psi}_{l_2}(t_k), \\
& r_{l,l_2}^{\nu_1, \nu_2} = \tau e^{i\tau\delta^{\nu_1, \nu_2}_{l,l_2}/2} - \int^{\tau}_0 e^{is\delta^{\nu_1, \nu_2}_{l,l_2}}\;ds = O(\tau^3(\delta^{\nu_1, \nu_2}_{l,l_2})^2).
\label{eq:rest1}
\end{align}
We only need to consider the case $\delta^{\nu_1,\nu_2}_{l, l_2} \neq0$ as $ r_{l, l_2}^{\nu_1, \nu_2} = 0$ if $\delta^{\nu_1, \nu_2}_{l, l_2} = 0$. For $l\in\mathcal{T}_{M_0}$ and $(l_1, l_2)\in\mathcal{I}_l^{M_0}$, we have
\begin{equation}
\vert\delta^{\nu_1,\nu_2}_{l, l_2}\vert \leq 2\delta_{M_0/2}= 2\sqrt{1+\mu_{{M_0}/2}^2} < 2\sqrt{1 + \frac{4\pi^2 (1+\tau_0)^2}{\tau_0^2(b-a)^2}},
\end{equation}
which implies for $0< \tau \leq \alpha \frac{\pi(b-a)\tau_0}{\sqrt{\tau_0^2(b-a)^2 + 4\pi^2(1+\tau_0)^2}}$ with $0 < \alpha, \tau_0 < 1$, there holds $\frac{\tau}{2}\vert \delta_{l, l_2}^{\nu_1, \nu_2} \vert \leq \alpha\pi$.
Denoting $S^{\nu_1, \nu_2}_{n, l, l_2}=\sum\limits_{k=0}^n e^{it_k \delta_{l, l_2}^{\nu_1, \nu_2}}$ ($n\ge0$), for $0< \tau \leq \alpha \frac{\pi(b-a)\tau_0}{\sqrt{\tau_0^2(b-a)^2 + 4\pi^2(1+\tau_0)^2}}$, we obtain
\begin{equation}
\label{eq:Sbd1}
 \vert S^{\nu_1, \nu_2}_{n, l, l_2}\vert \leq \frac{1}{\vert \sin(\tau \delta_{l, l_2}^{\nu_1, \nu_2}/2)\vert}\leq\frac{C}{\tau\vert\delta_{l, l_2}^{\nu_1, \nu_2}\vert},\quad C = \frac{2\alpha\pi}{\sin(\alpha\pi)},\quad  \forall n\ge0.
\end{equation}
Using the summation-by-parts, we find that
\begin{align}
\sum_{k=0}^n\lambda_{k,l,l_1,l_2}^{\nu_1, \nu_2}= & \
r_{l, l_2}^{\nu_1, \nu_2}\left[\sum_{k=0}^{n-1}S^{\nu_1,\nu_2}_{k,l,l_2} (c_{k,l,l_1,l_2}^{\nu_1, \nu_2} - c_{k+1,l,l_1,l_2}^{\nu_1, \nu_2})  +S_{n, l, l_2}^{\nu_1, \nu_2}c_{n,l,l_1,l_2}^{\nu_1, \nu_2}\right],
\label{eq:lambdasum1}
\end{align}
with
\begin{equation}
c_{k,l,l_1,l_2}^{\nu_1, \nu_2}-c_{k+1,l,l_1,l_2}^{\nu_1, \nu_2} = 	\Pi_l^{\nu_1}\widehat{\bV}_{l_1}\Pi_{l_2}^{\nu_2}\left(\widehat{\Psi}_{l_2}(t_k) - \widehat{\Psi}_{l_2}(t_{k+1})\right).
\label{eq:cksum1}
\end{equation}
Combining \eqref{eq:rest1}, \eqref{eq:Sbd1}, \eqref{eq:lambdasum1}, and \eqref{eq:cksum1}, we have
\begin{equation}
\left\vert\sum_{k=0}^n \lambda_{k,l,l_1,l_2}^{\nu_1, \nu_2}\right\vert
\lesssim   \tau^2\vert\delta_{l, l_2}^{\nu_1, \nu_2}\vert\left\vert\widehat{\bV}_{l_1}\right\vert\left[\sum\limits_{k=0}^{n-1} \left\vert\widehat{\Psi}_{l_2}(t_k)-\widehat{\Psi}_{l_2}(t_{k+1})\right\vert
 + \left\vert\widehat{\Psi}_{l_2}(t_n)\right\vert\right] .\label{eq:sumlambda1}
\end{equation}

\noindent
{\textbf{Step 3.}} Now, we are ready to give the improved estimates. For $l\in\mathcal{T}_{M_0}$ and $(l_1, l_2)\in\mathcal{I}_l^{M_0}$, simple calculations show ($l = l_1 + l_2$)
\begin{equation}
\vert \delta_{l, l_2}^{\nu_1, \nu_2}\vert \lesssim\prod_{j=1}^2(1+\mu_{l_j}^2)^{1/2}.
\label{eq:mlbd1}
\end{equation}
Based on \eqref{eq:reminder-dec}, \eqref{eq:sumlambda1} and \eqref{eq:mlbd1},  using Cauchy inequality, we have
\begin{align}
&\left\| \mathcal{L}^n\right\|^2_{L^2}  \label{eq:sumlambda-21}\\
&= \eps^2
\sum\limits_{l\in\mathcal{T}_{N_0}}\big\vert \sum\limits_{(l_1,l_2)\in\mathcal{I}_l^{N_0}}\sum\limits_{\nu_j = \pm,  j=1, 2}\sum\limits_{k=0}^n\lambda_{k, l, l_1, l_2}^{\nu_1,\nu_2}\big\vert^2\nonumber\\
&\lesssim \varepsilon^2 \tau^4
\bigg\{\sum_{l\in\mathcal{T}_{M_0}}\bigg(\sum\limits_{(l_1, l_2)\in\mathcal{I}_l^{N_0}}\left\vert \widehat{\bV}_{l_1}\right\vert \left\vert \widehat{\Psi}_{l_2}(t_n)\right\vert \prod_{j=1}^2(1+\mu_{l_j}^2)^{1/2} \bigg)^{2}\nonumber\\
&\quad+ n \sum\limits_{k=0}^{n-1} \bigg[\sum_{l\in\mathcal{T}_{M_0}}\bigg(\sum\limits_{(l_1, l_2)\in\mathcal{I}_l^{N_0}}
\left\vert\widehat{\bV}_{l_1}\right\vert\left\vert\widehat{\Psi}_{l_2}(t_k)-\widehat{\Psi}_{l_2}(t_{k+1})\right\vert\prod_{j=1}^2(1+\mu_{l_j}^2)^{1/2} \bigg)^2\bigg]\bigg\}.\nonumber
\end{align}
In order to estimate each term in the above inequality, we use the auxiliary function $\Theta(x)=\sum_{l\in\mathbb{Z}}(1+\mu_l^2)^{1/2}\left\vert\widehat{\Psi}_l(t_n)\right\vert e^{i\mu_l(x-a)}$, where $\Theta(x)\in \left(H_{\rm per}^{m-1}(\Omega)\right)^2$ which can be verified from the assumption (B) and we can prove $\|\Theta(x)\|_{H^{s}}\lesssim\|\Psi(t_n)\|_{H^{s+1}}$ ($s \leq m-1$). Similarly, introduce the function $\bU(x) = \sum_{l\in\mathbb{Z}}(1+\mu_l^2)^{1/2}\left\vert \widehat{\bV}_l\right\vert e^{i\mu_l(x-a)}$, where $\bU(x) \in {W^{m-1, \infty}_{\rm per}} (\Omega)$ as can be derived from the assumption (A).  We can prove directly that  $\|\bU(x)\|_{W^{0,\infty}}\lesssim\|\bV(x)\|_{W^{2, \infty}}$. Expanding
\[\bU(x)\Theta(x)=\sum\limits_{l\in\mathbb{Z}}\sum\limits_{l_1+l_2=l} \prod_{j=1}^2 (1+\mu_{l_j}^2)^{1/2} \left\vert\widehat{\bV}_{l_1}\right\vert\left\vert\widehat{\Psi}_{l_2}(t_n)\right\vert e^{i\mu_l(x-a)},\]
we can obtain
\begin{align}
&\sum_{l\in\mathcal{T}_{M_0}} \bigg(\sum\limits_{(l_1, l_2)\in\mathcal{I}_l^{M_0}}\left\vert\widehat{\bV}_{l_1}\right\vert\left\vert\widehat{\Psi}_{l_2}(t_n)\right\vert \prod_{j=1}^2(1+\mu_{l_j}^2)^{1/2} \bigg)^2\nn\\
& \leq \ \|\bU(x)\Theta(x)\|^2_{L^2} \lesssim \|\bV(x)\|_{W^{2, \infty}}^2 \|\Psi(t_n)\|_{H^1}^2,
\label{eq:est_ll}
\end{align}
which together with the assumption (A), \eqref{eq:Psi_regu} and \eqref{eq:S1twist} gives
\begin{align}
\left\| \mathcal{L}^n\right\|^2_{L^2} & \lesssim \varepsilon^2\tau^4\|\bV(x)\|_{W^{2, \infty}}^2 \bigg(\|\Psi(t_n)\|_{H^1}^2 + n\sum\limits_{k=0}^{n-1}
\|\Psi(t_k) - \Psi(t_{k+1})\|_{H^1}^2\bigg)\nn \\
& \lesssim \varepsilon^2 \tau^4+ n^2\varepsilon^4 \tau^6
\lesssim \varepsilon^2\tau^4.
\label{eq:est-l2l}
\end{align}
for $0 \leq n \leq \frac{T/\varepsilon}{\tau}-1$, where the same trick is applied to the rest terms. Combining \eqref{eq:matl} and \eqref{eq:est-l2l}, we have
\begin{equation}
\|{\bf e}^{n+1}\|_{L^2} \lesssim  \tau_0^m + \eps \tau^2 +\varepsilon\tau \sum\limits_{k=0}^n\|{\bf e}^{k}\|_{L^2},\quad 0\leq n\leq\frac{T/\eps}{\tau}-1.
\end{equation}
Discrete Gronwall's inequality would yield $\|{\bf e}^{n+1}\|_{L^2} \lesssim \eps \tau^2 + \tau_0^m$ $ (0\leq n\leq\frac{T/\varepsilon}{\tau}-1)$, and the proof for the improved uniform error bound \eqref{eq:impr_semi} in Theorem \ref{thm:improved_semi} is completed.
\end{proof}

\begin{remark}
$\tau_0 \in (0, 1)$ is a parameter introduced in analysis and the requirement on $\tau$ (essentially $\tau\lesssim\tau_0$) enables the improved estimates on the low Fourier modes $|l|\leq1/\tau_0$, where the error constant depends on $\alpha$. $\tau_0$ can be arbitrary as long as the assumed relation between $\tau$ and $\tau_0$ holds, i.e. $\tau_0$ could be either fixed, or dependent on $\tau$,  e.g. $\tau_0=\frac{\sqrt{16\pi^2+(b-a)^2}}{\alpha(b-a)\pi}\tau$.
\end{remark}

\begin{remark} The improved uniform error bounds are established for the second-order Strang splitting method. Under appropriate assumptions of the exact solution, the improved uniform error bounds can be extended to the first-order Lie-Trotter splitting and the fourth-order PRK splitting method with improved uniform error bounds at $\eps \tau$ and $\eps \tau^{4}$, respectively.
\end{remark}

\subsection{For full-discretization}
For the TSFP method \eqref{eq:TSFP}, we could establish the following improved uniform error bounds.
\begin{theorem}
Let $\Phi^n$ be the approximation obtained from the TSFP \eqref{eq:TSFP} for the Dirac equation \eqref{eq:Dirac1d}. Under the assumptions (A) and (B), for $0 < \tau_0 < 1$ sufficiently small and independent of $\eps$, when $0 < \tau \leq \alpha \frac{\pi(b-a)\tau_0}{\sqrt{\tau_0^2(b-a)^2 + 4\pi^2(1+\tau_0)^2}}$ for a fixed constant $\alpha \in (0, 1)$, we have
\begin{equation}
\left\|\Phi(t_n, x) - I_M\Phi^n\right\|_{L^2} \lesssim  h^m + \eps\tau^2 + \tau^m_0, \quad  0 \leq n \leq \frac{T/\varepsilon}{\tau}
\label{eq:improved_full}	
\end{equation}
for any $0 < \varepsilon \leq 1$. In particular, if the exact solution is smooth, e.g., $\Phi(t, x) \in L^\infty([0, T_\eps]; (H^{\infty}_{\rm per}(\Omega))^2)$, the improved uniform error bounds for sufficiently small $\tau$ will be
\begin{equation}
\left\|\Phi(t_n, x) - I_M\Phi^n\right\|_{H^1} \lesssim  h^m + \eps\tau^2, \quad  0 \leq n \leq \frac{T/\varepsilon}{\tau}.
\end{equation}
\label{thm:improved_full}
\end{theorem}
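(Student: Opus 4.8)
Here is the plan I would follow for Theorem~\ref{thm:improved_full}. The idea is to graft the regularity‑compensation‑oscillation (RCO) argument from the proof of Theorem~\ref{thm:improved_semi} onto the interpolation/projection framework of the proof of Theorem~\ref{thm:TSFP}. As in Theorem~\ref{thm:TSFP}, I would first write $I_M\Phi^n-\Phi(t_n) = {\bf e}^n + (P_M-I)\Phi(t_n)$ with ${\bf e}^n := I_M\Phi^n - P_M\Phi(t_n)\in Y_M$, use the Fourier estimate $\|(I-P_M)\Phi(t_n)\|_{L^2}\lesssim h^m$ from (B), and note $\|{\bf e}^0\|_{L^2}\lesssim h^m$, so that the whole problem reduces to bounding the band‑limited error ${\bf e}^n$. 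Throughout I take $h$ small enough that the fixed integer $M_0 = 2\lceil 1/\tau_0\rceil$ of the RCO cut‑off satisfies $M_0\le M$ (equivalently $h\lesssim\tau_0$), so that $P_{M_0}P_M = P_{M_0}$; this is the only regime in which the statement has content.

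The crucial place where I must deviate from the proof of Theorem~\ref{thm:TSFP} is that there the step $\|I_M\Phi^{(2)}-I_M\Phi^{\langle2\rangle}\|_{L^2}=\|{\bf e}^n\|_{L^2}$ discards the unitary phase $e^{\tau\bT}$, which is exactly what RCO exploits. Instead I would retain it: using that $e^{\tau\bT/2}$ is a Fourier multiplier commuting with $P_M$ and acting as the identity on $Y_M$, together with the expansion $e^{\eps\tau\bV}=I+\eps\tau\bV\int_0^1 e^{\eps\tau\theta\bV}\,d\theta$, one obtains, for $0\le n\le T_\eps/\tau-1$,
\[
{\bf e}^{n+1} = e^{\tau\bT}{\bf e}^n + \tilde{W}^n + \mathcal{D}^n + P_M\mathcal{E}^n,
\]
where $\tilde{W}^n = \eps\tau\,e^{\tau\bT/2}I_M\!\left(\bV\!\int_0^1 e^{\eps\tau\theta\bV}\,d\theta\;e^{\tau\bT/2}{\bf e}^n\right)$ obeys $\|\tilde{W}^n\|_{L^2}\lesssim\eps\tau\|{\bf e}^n\|_{L^2}$ by (A) and stability of $I_M$, $\mathcal{D}^n$ is the interpolation‑versus‑projection defect with $\|\mathcal{D}^n\|_{L^2}\lesssim\eps\tau h^m$ (this is exactly the estimate $\|P_M\Phi^{\langle2\rangle}-I_M\Phi^{\langle2\rangle}\|_{L^2}\le C_4\eps\tau h^m$ from the proof of Theorem~\ref{thm:TSFP}), and, by \eqref{eq:E_local}--\eqref{eq:E_local2}, $P_M\mathcal{E}^n = P_M\mathcal{F}(\Phi(t_n)) + P_M\mathcal{R}^n$ with $\|P_M\mathcal{R}^n\|_{L^2}\lesssim\eps^2\tau^3$. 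Solving the recursion (now with ${\bf e}^0\ne{\bm 0}$, unlike the semi‑discrete case),
\[
{\bf e}^{n+1} = e^{(n+1)\tau\bT}{\bf e}^0 + \sum_{k=0}^n e^{(n-k)\tau\bT}\!\left(\tilde{W}^k+\mathcal{D}^k+P_M\mathcal{R}^k\right) + \mathcal{L}^n,\qquad \mathcal{L}^n:=\sum_{k=0}^n e^{(n-k)\tau\bT}P_M\mathcal{F}(\Phi(t_k)).
\]

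Since $e^{\tau\bT}$ is unitary on every $(H^k_{\rm per}(\Omega))^2$, the first three sums are bounded termwise: $\|e^{(n+1)\tau\bT}{\bf e}^0\|_{L^2}\lesssim h^m$, $\sum_k\|\mathcal{D}^k\|_{L^2}\lesssim n\eps\tau h^m\lesssim h^m$, $\sum_k\|P_M\mathcal{R}^k\|_{L^2}\lesssim n\eps^2\tau^3\lesssim\eps\tau^2$, and $\sum_k\|\tilde{W}^k\|_{L^2}\lesssim\eps\tau\sum_k\|{\bf e}^k\|_{L^2}$, the last being deferred to Gronwall. For $\mathcal{L}^n$ I would repeat verbatim Steps~1--3 of the proof of Theorem~\ref{thm:improved_semi}: by linearity of $\mathcal{F}$ replace $\Phi(t_k)$ by $P_{M_0}\Phi(t_k)$ (residual $\lesssim\eps\tau\tau_0^m$ per step, hence $\lesssim\tau_0^m$ after summation, the outer $P_M$ being absorbed because $M_0\le M$); pass to the twisted variable $\Psi(t,x)=e^{-t\bT}\Phi(t,x)$, which by \eqref{eq:Psi_regu}--\eqref{eq:S1twist} satisfies $\|\Psi(t_{k+1})-\Psi(t_k)\|_{H^m}\lesssim\eps\tau$; expand $e^{t\bT}P_{M_0}\Psi(t_k)$ in the eigenprojections $\Pi_l^{\pm}$ of $\Gamma_l$; and apply summation‑by‑parts in $k$ to the geometric sums $S^{\nu_1,\nu_2}_{n,l,l_2}=\sum_k e^{it_k\delta^{\nu_1,\nu_2}_{l,l_2}}$. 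The hypothesis on $\tau$ guarantees $\frac{\tau}{2}|\delta^{\nu_1,\nu_2}_{l,l_2}|\le\alpha\pi$ for $l,l_2\in\mathcal{T}_{M_0}$, hence $|S^{\nu_1,\nu_2}_{n,l,l_2}|\lesssim 1/(\tau|\delta^{\nu_1,\nu_2}_{l,l_2}|)$; combined with $r^{\nu_1,\nu_2}_{l,l_2}=O(\tau^3(\delta^{\nu_1,\nu_2}_{l,l_2})^2)$, $|\delta^{\nu_1,\nu_2}_{l,l_2}|\lesssim\prod_{j}(1+\mu_{l_j}^2)^{1/2}$, and the Cauchy--Schwarz argument with the auxiliary functions $\Theta$ and $\bU$, this yields $\|\mathcal{L}^n\|_{L^2}\lesssim\eps\tau^2+\tau_0^m$ uniformly for $0\le n\le T_\eps/\tau$. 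Collecting everything, $\|{\bf e}^{n+1}\|_{L^2}\lesssim h^m+\eps\tau^2+\tau_0^m+\eps\tau\sum_{k=0}^n\|{\bf e}^k\|_{L^2}$, and discrete Gronwall (the total multiplier $\eps\tau\cdot T_\eps/\tau=T$ is bounded) proves \eqref{eq:improved_full}. For smooth data one reruns Steps~2--3 in the $H^1$‑norm, using the extra regularity in (B) for both the projection defects and the auxiliary functions, so that the $\tau_0^m$ term is exponentially small and drops.

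The main obstacle is not the oscillatory estimate — which transfers mechanically from Theorem~\ref{thm:improved_semi} once the $e^{\tau\bT}$ phase is kept in the recursion — but the handling of the pseudospectral discretization itself: since $I_M$ does not commute with multiplication by $V$ and $A_1$ (aliasing), one cannot absorb the interpolation error into the flow as the semi‑discrete proof tacitly does, and must instead keep the recursion in the split form above and invoke the Fourier projection estimates of Theorem~\ref{thm:TSFP} to control $\mathcal{D}^n$ at order $\eps\tau h^m$, so that its accumulation over the $O(1/(\eps\tau))$ steps is only $O(h^m)$ rather than $O(h^m/\tau)$. A secondary technical point is the compatibility of the RCO cut‑off level $M_0$ with the grid size $M$: since $M_0$ depends only on the fixed analysis parameter $\tau_0$ while $M=(b-a)/h\to\infty$ as $h\to 0$, this holds automatically for $h\lesssim\tau_0$, and for coarser $h$ the extra residual is itself $O(\tau_0^m)$ and harmless.
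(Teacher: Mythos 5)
Your proposal is correct in outline, but it takes a genuinely different and more laborious route than the paper. The paper never redoes the RCO analysis at the fully discrete level: it splits $\Phi(t_n)-I_M\Phi^n = \bigl(\Phi(t_n)-\Phi^{[n]}\bigr)+\bigl(\Phi^{[n]}-P_M\Phi^{[n]}\bigr)+\bigl(P_M\Phi^{[n]}-I_M\Phi^n\bigr)$, bounds the first piece by simply invoking the semi-discrete Theorem~\ref{thm:improved_semi} (which already contains all the oscillatory work), the second by the projection estimate, and the third via the elementary recursion $\widetilde{\bf e}^{n+1}=e^{\tau\bT}\widetilde{\bf e}^n+W^n$ with $\|W^n\|_{L^2}\lesssim\eps\tau(h^m+\|\widetilde{\bf e}^n\|_{L^2})$ and Gronwall, giving $\|\widetilde{\bf e}^n\|_{L^2}\lesssim h^m$. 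You instead compare $I_M\Phi^n$ directly to $P_M\Phi(t_n)$ and rerun Steps~1--3 of the RCO argument on the fully discrete error, carrying the interpolation-versus-projection defect $\mathcal{D}^n$ of size $O(\eps\tau h^m)$ through the summation. Both routes close: yours is a self-contained one-pass argument that only ever uses regularity of the exact solution, and you correctly identify the two technical points the direct route forces you to address (the aliasing defect must come with the $\eps\tau$ prefactor so it accumulates to $O(h^m)$, and the compatibility $M_0\le M$ of the RCO cut-off with the grid); the paper's route is shorter and reuses Theorem~\ref{thm:improved_semi} wholesale, at the mild price of implicitly needing a uniform $H^m$ bound on the semi-discrete solution $\Phi^{[n]}$ for the middle term $\|\Phi^{[n]}-P_M\Phi^{[n]}\|_{L^2}\lesssim h^m$, which your decomposition avoids. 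Neither you nor the paper actually proves the $H^1$ version stated for smooth solutions beyond asserting the extension, so no difference there.
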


\begin{proof}
From the error estimates in previous sections, we have for $0 \leq n\leq \frac{T/\eps}{\tau}$,
\begin{equation}
\left\|\Phi(t_n, x) - \Phi^{[n]}\right\|_{L^2} \lesssim \eps\tau^2 + \tau^m_0,\quad	\left\|\Phi^{[n]}-P_M\Phi^{[n]}\right\|_{L^2} \lesssim h^m.
\end{equation}
Since $\Phi(t_n, x) - I_M\Phi^n = \Phi(t_n, x) - \Phi^{[n]}+ \Phi^{[n]} - P_M\Phi^{[n]} + P_M\Phi^{[n]} - I_M\Phi^n$, we derive
\begin{equation}
\left\|\Phi(t_n, x) - I_M\Phi^n\right\|_{L^2} \leq \left\|P_M\Phi^{[n]} - I_M\Phi^n\right\|_{L^2} + C_1\left(h^m + \eps\tau^2 + \tau^m_0 \right),
\label{eq:diff}
\end{equation}
where $C_1$ is a constant independent of $h$, $\tau$, $n$, $\eps$ and $\tau_0$. As a result, it remains to establish the estimates  on the error function ${\widetilde{\bf e}}^n := {\widetilde{\bf e}}^n(x) \in Y_M$  given as
\[
{\widetilde{\bf e}}^n(x) := P_M\Phi^{[n]} - I_M\Phi^n,\quad 0 \leq n \leq \frac{T/\eps}{\tau}.\]
From \eqref{eq:TSFP} and \eqref{eq:S2}, we get
\begin{align*}
& I_M\Phi^{n+1} = e^{\frac{\tau {\bT}}{2}}\left(I_M\left(e^{\eps\tau\bV}e^{\frac{\tau {\bT}}{2}}(I_M\Phi^n)\right)\right),\\
& P_M\Phi^{[n+1]} =  e^{\frac{\tau {\bT}}{2}}\left(P_M\left(e^{\eps\tau\bV} e^{\frac{\tau {\bT}}{2}}(P_M\Phi^{[n]})\right)\right),
\end{align*}
which lead to
\begin{align}\label{eq:errg:f}
\widetilde{\bf e}^{n+1}=e^{\tau\bT}\widetilde{\bf e}^{n}+W^n(x),
\end{align}
where
\begin{equation*}
W^n(x) = e^{\frac{\tau\bT}{2}}\left[P_M\left((e^{\eps\tau\bV}-1)e^{\frac{\tau {\bT}}{2}}(P_M\Phi^{[n]})\right) - I_M\left((e^{\eps\tau\bV}-1)e^{\frac{\tau {\bT}}{2}}(I_M\Phi^n)\right)\right].
\end{equation*}
Similar to the error estimates in Ref.~\refcite{BCJY}, we have the following error bounds
\begin{equation}
\left\|W^n(x)\right\|_{L^2} \lesssim \eps\tau\left(h^m + \left\|\widetilde{\bf e}^n\right\|_{L^2}\right),	
\end{equation}
Thus, we could obtain
\begin{equation}
\left\|\widetilde{\bf e}^{n+1}\right\|_{L^2} \leq \left\|\widetilde{\bf e}^{n}\right\|_{L^2} +  C_2 \eps\tau\left(h^m + \left\|\widetilde{\bf e}^n\right\|_{L^2}\right), \quad 0 \leq n \leq \frac{T/\eps}{\tau}-1,
\end{equation}
where $C_2$ is a constant independent of $h$, $\tau$, $n$ and $\eps$. Since $\widetilde{\bf e}^0 = P_M \Phi_0 - I_M \Phi_0$, we have $\left\|\widetilde{\bf e}^0\right\|_{L^2} \lesssim h^m$ and discrete Gronwall's inequality implies $\left\|\widetilde{\bf e}^{n+1}\right\|_{L^2} \lesssim h^m$ for $0 \leq n \leq \frac{T/\eps}{\tau}-1$. Combining the above estimates with \eqref{eq:diff}, we derive
\[\left\|\Phi(t_n, x) - I_M\Phi^n\right\|_{L^2} \lesssim h^m + \eps \tau^2+\tau^m_0,\quad 0\leq n\leq \frac{T/\eps}{\tau},\]
which shows the improved uniform error bound \eqref{eq:improved_full} and the proof for Theorem \ref{thm:improved_full} is completed.
\end{proof}

\section{Numerical results}
\label{sec:num}
In this section, we present numerical results of the TSFP method for the long-time dynamics of the Dirac equation with $O(\eps)$-potentials up to the long-time $T_\eps=T/\eps$.

\subsection{For $\varepsilon = 1$ with $T  \gg 1$ regime}
First, we show an example to confirm that the uniform error bound grows linearly  with respect to the time $t$. We take $\Omega = (0, 1)$, the electromagnetic potentials
\begin{equation}
V(x) = x^2(x-1)^2+1, \quad A_1(x)= x^2(x-1)^2+1, \quad x \in \Omega.
\label{eq:potential}
\end{equation}
and the $(H^2_{\rm per}(\Omega))^2$ initial data
\begin{equation}
\phi_1(x) = \phi_2(x) = \frac{1}{2}x^2(1-x)^2 +3, \quad x \in \Omega.
\label{eq:num_in}
\end{equation}
The regularity is enough to ensure the uniform and improved error bounds. The `exact' solution $\Phi(t, x)$ is obtained numerically by the TSFP \eqref{eq:TSFP} with a very fine mesh size $h_e = 1/128$ and  time step size $\tau_e = 10^{-4}$. To quantify the error, we introduce the following error functions:
\begin{equation*}
e(t_n) = \left\|\Phi(t_n, x) - I_N \Phi^n\right\|_{L^2}, \quad e_{\max}(t_n) = \max_{0 \leq q \leq n}e(t_q).	
\end{equation*}
In the rest of the paper, the spatial mesh size is always chosen sufficiently small and thus spatial errors can be ignored when considering the long-time error growth and/or the temporal errors.

\begin{figure}[ht!]
\centerline{\includegraphics[width=12cm,height=5.5cm]{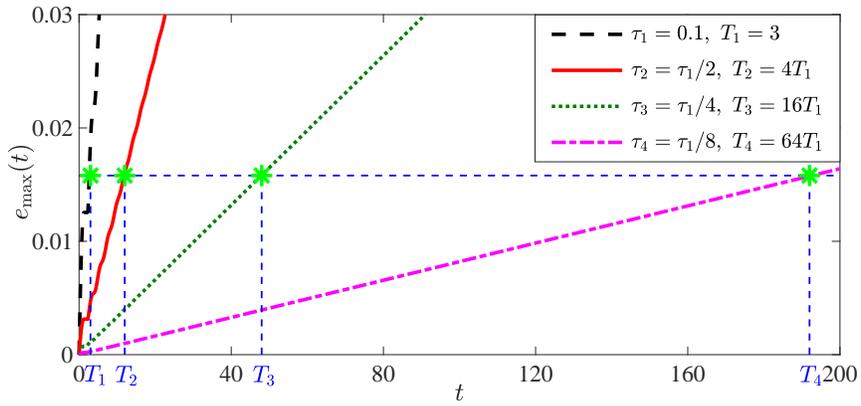}}
\caption{Long-time temporal errors of the TSFP \eqref{eq:TSFP} for the Dirac equation \eqref{eq:Dirac_1D} with $\varepsilon = 1$ and different time step $\tau$.}
\label{fig:long_tau}
\end{figure}


\begin{figure}[ht!]
\centerline{\includegraphics[width=12cm,height=5.5cm]{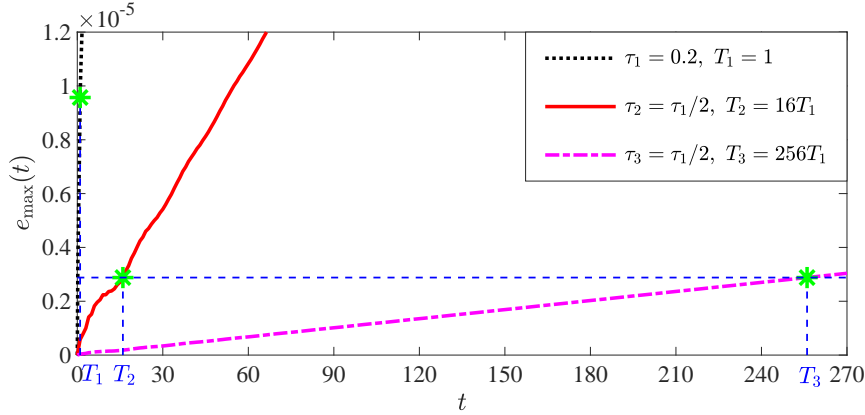}}
\caption{Long-time temporal errors of the PRK4 method for the Dirac equation \eqref{eq:Dirac_1D} with $\varepsilon = 1$ and different time step $\tau$.}
\label{fig:long_PRK}
\end{figure}

Fig. \ref{fig:long_tau} depicts the long-time temporal errors of the TSFP method for the Dirac equation \eqref{eq:Dirac_1D} with $\varepsilon = 1$ and different time step $\tau$, which shows that the uniform errors grows linearly  with respect to the time. In addition, for a given accuracy bound, the time to exceed  the error bar is quadruple when the time step is half, which also confirms the linear growth. For comparisons, Fig. \ref{fig:long_PRK} plots the long-time errors of the PRK4 method, which indicates that higher order time-splitting methods could get better accuracy with the same time step  size as well as longer time simulations within a given accuracy bound.

\subsection{For $\varepsilon\to0$ with fixed $T$ regime}
Next, we report the convergence test for the TSFP method \eqref{eq:TSFP} for the Dirac equation \eqref{eq:Dirac_1D} with the electromagnetic potentials \eqref{eq:potential} and the initial data \eqref{eq:num_in}.

\begin{figure}[ht!]
\centerline{\includegraphics[width=12cm,height=5.5cm]{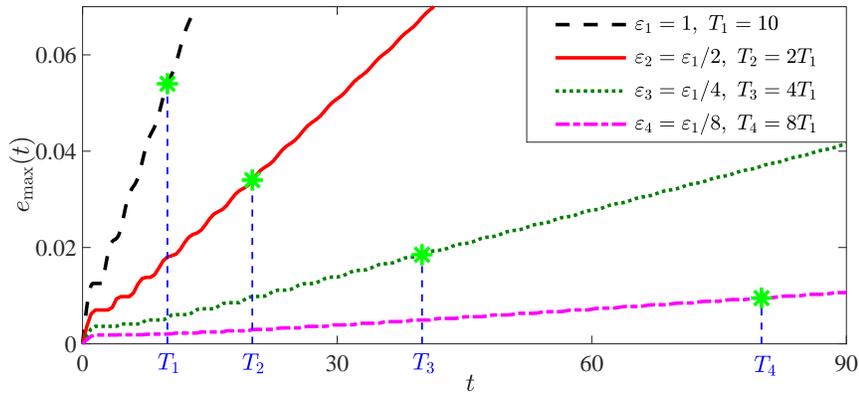}}
\caption{Long-time temporal errors of the TSFP \eqref{eq:TSFP} for the Dirac equation \eqref{eq:Dirac_1D} with $\tau = 0.1$ and different $\eps$.}
\label{fig:long_eps}
\end{figure}

\begin{figure}[ht!]
\begin{minipage}{0.49\textwidth}
\centerline{\includegraphics[width=6.3cm,height=5.5cm]{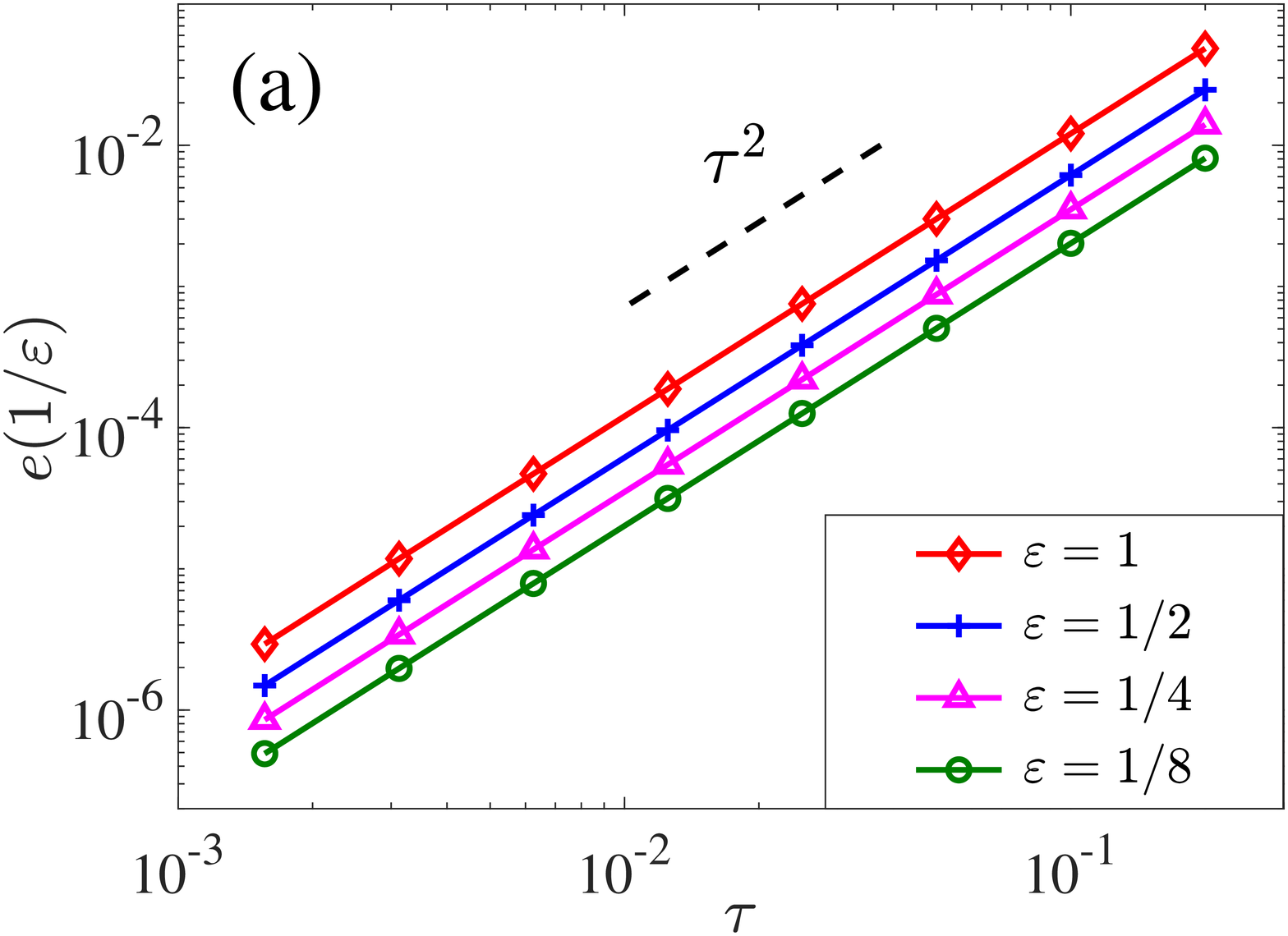}}
\end{minipage}
\begin{minipage}{0.49\textwidth}
\centerline{\includegraphics[width=6.3cm,height=5.5cm]{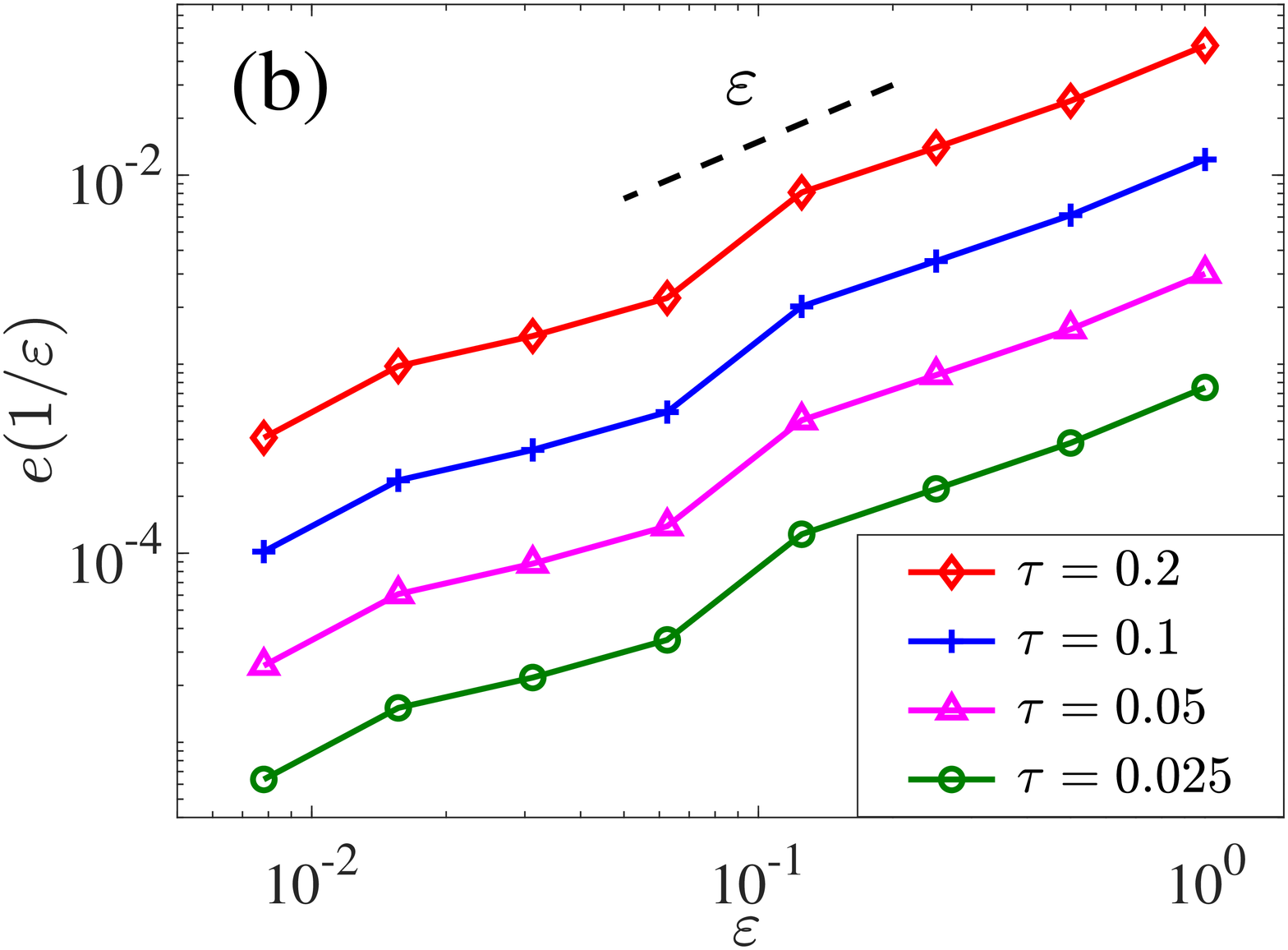}}
\end{minipage}
\caption{Temporal convergence rates of the TSFP \eqref{eq:TSFP} for the Dirac equation \eqref{eq:Dirac_1D} over long-time dynamics at $t = 1/\varepsilon$:
convergence rates in $\tau$ (a), and convergence rates in $\eps$ (b).}
\label{fig:temporal}
\end{figure}

\begin{figure}[ht!]
\begin{minipage}{0.49\textwidth}
\centerline{\includegraphics[width=6.3cm,height=5.5cm]{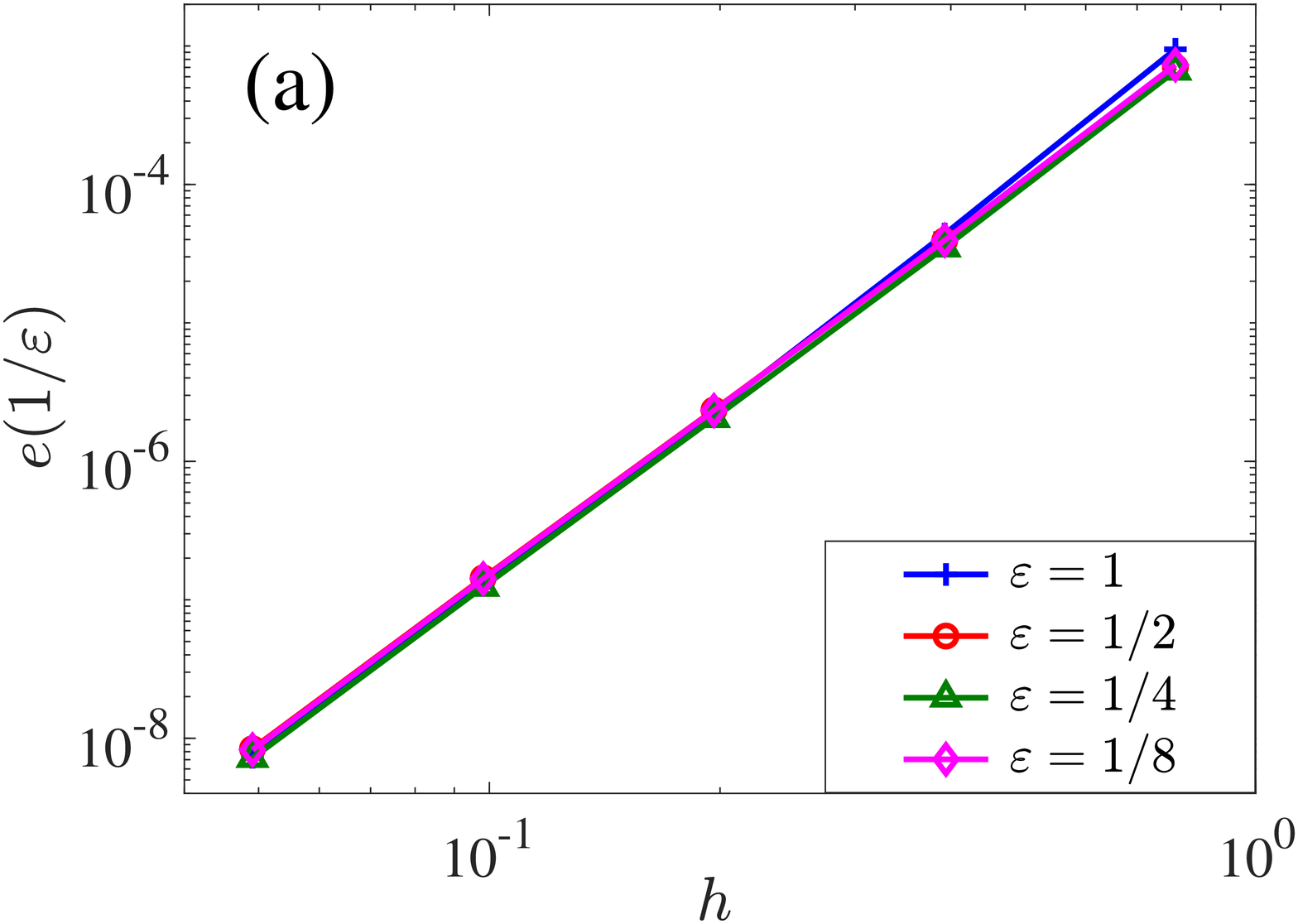}}
\end{minipage}
\begin{minipage}{0.49\textwidth}
\centerline{\includegraphics[width=6.3cm,height=5.5cm]{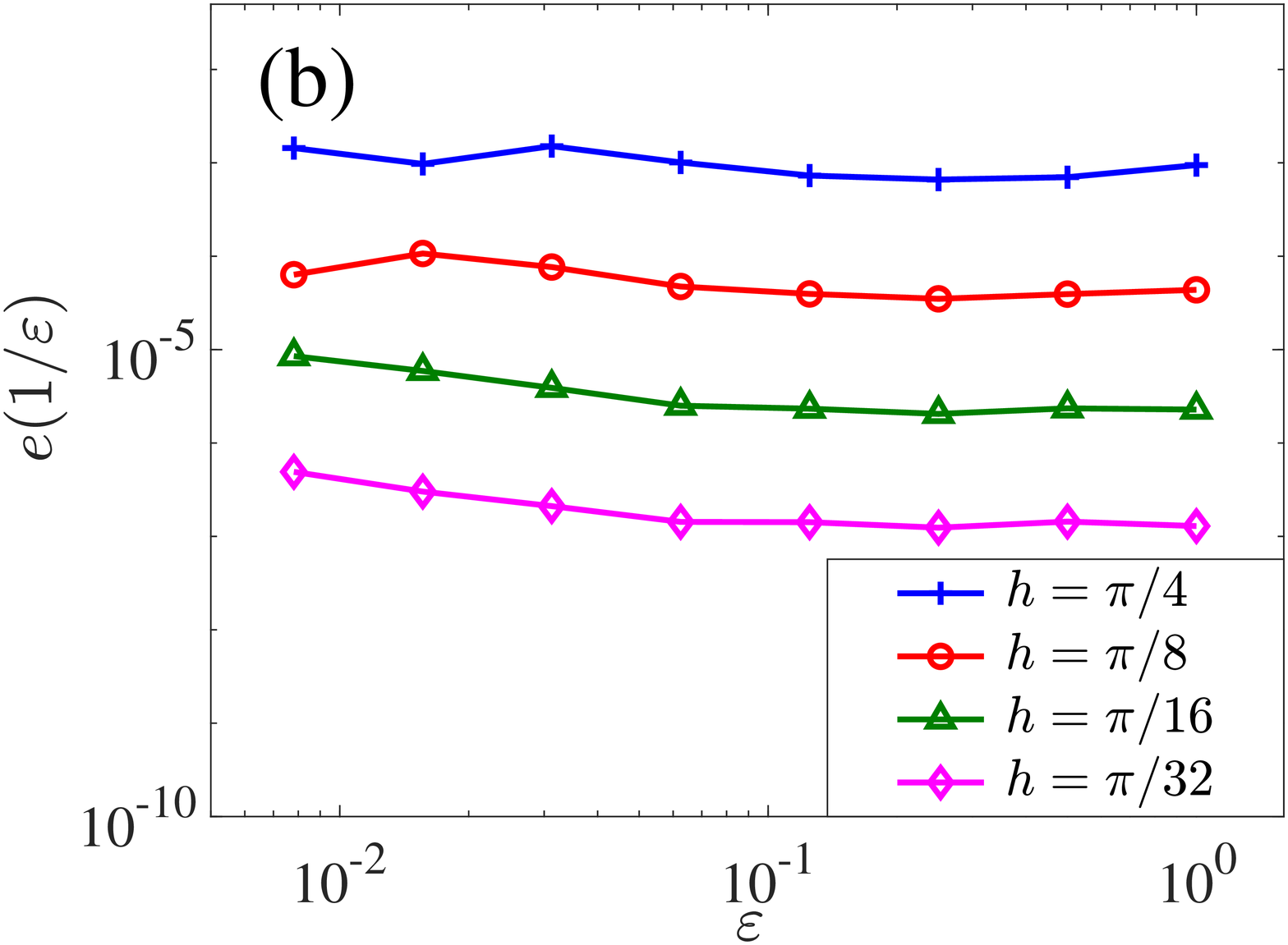}}
\end{minipage}
\caption{Spatial convergence rates of the TSFP \eqref{eq:TSFP} for the Dirac equation \eqref{eq:Dirac_1D} over long-time dynamics at $t = 1/\varepsilon$:
convergence rates in $\tau$ (a), and convergence rates in $\eps$ (b).}
\label{fig:spatial}
\end{figure}

Fig. \ref{fig:long_eps} plots the long-time errors of the TSFP method for the Dirac equation \eqref{eq:Dirac_1D} with the fixed time step $\tau$ and different $\varepsilon$, which confirms the improved uniform error bound at $O(\eps \tau^2$) up to the long-time at $O(1/\eps)$. Figs. \ref{fig:temporal} \&  \ref{fig:spatial} exhibit the temporal and spatial errors of the TSFP \eqref{eq:TSFP} for the Dirac equation \eqref{eq:Dirac_1D}  at $t= 1/\varepsilon$. Fig. \ref{fig:temporal}(a) shows the second-order convergence of the TSFP method in time. Each line in Fig. \ref{fig:temporal}(b) gives the global errors with a fixed time step $\tau$ and verifies that the global error performs like $O(\varepsilon\tau^2)$ up to the long-time at $O(1/\varepsilon)$. Each line in Fig. \ref{fig:spatial}(a) shows the spectral accuracy of the TSFP method in space and Fig. \ref{fig:spatial}(b) verifies the spatial errors are independent of the small parameter $\varepsilon$ in the long-time regime.

\subsection{Comparisons of different temporal discretizaitons}
In this subsection, we compare the long-time temporal errors of the time-splitting methods with the finite difference method (FDM) and the exponential wave integrator (EWI) method\cite{FY,FY2}. In order to compare the temporal errors, we adopt the Fourier pseudospectral method in space combined with each temporal discretization and choose a fine mesh size such that the spatial errors are neglected.

\begin{figure}[ht!]
\begin{minipage}{0.49\textwidth}
\centerline{\includegraphics[width=6.3cm,height=5.5cm]{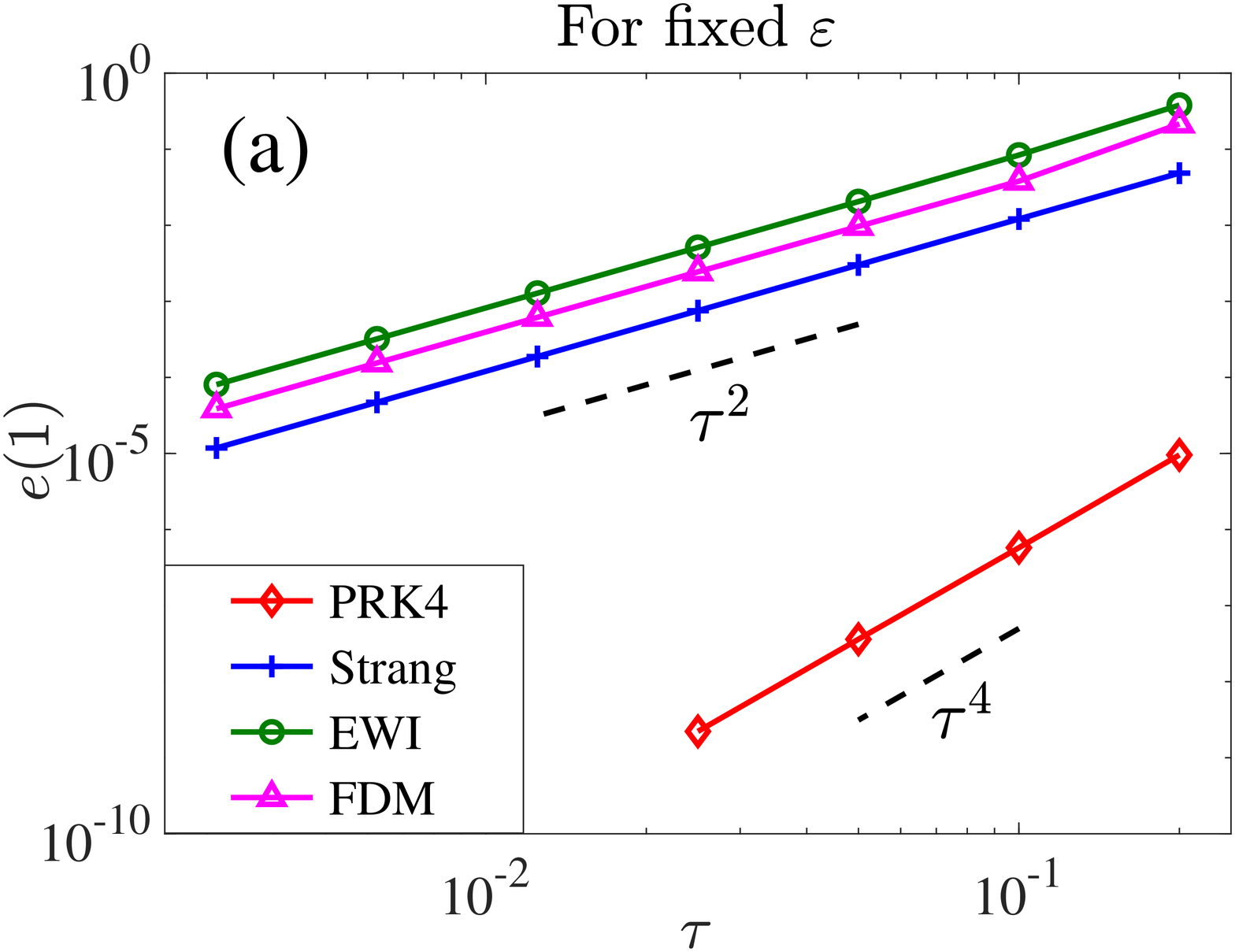}}
\end{minipage}
\begin{minipage}{0.49\textwidth}
\centerline{\includegraphics[width=6.3cm,height=5.5cm]{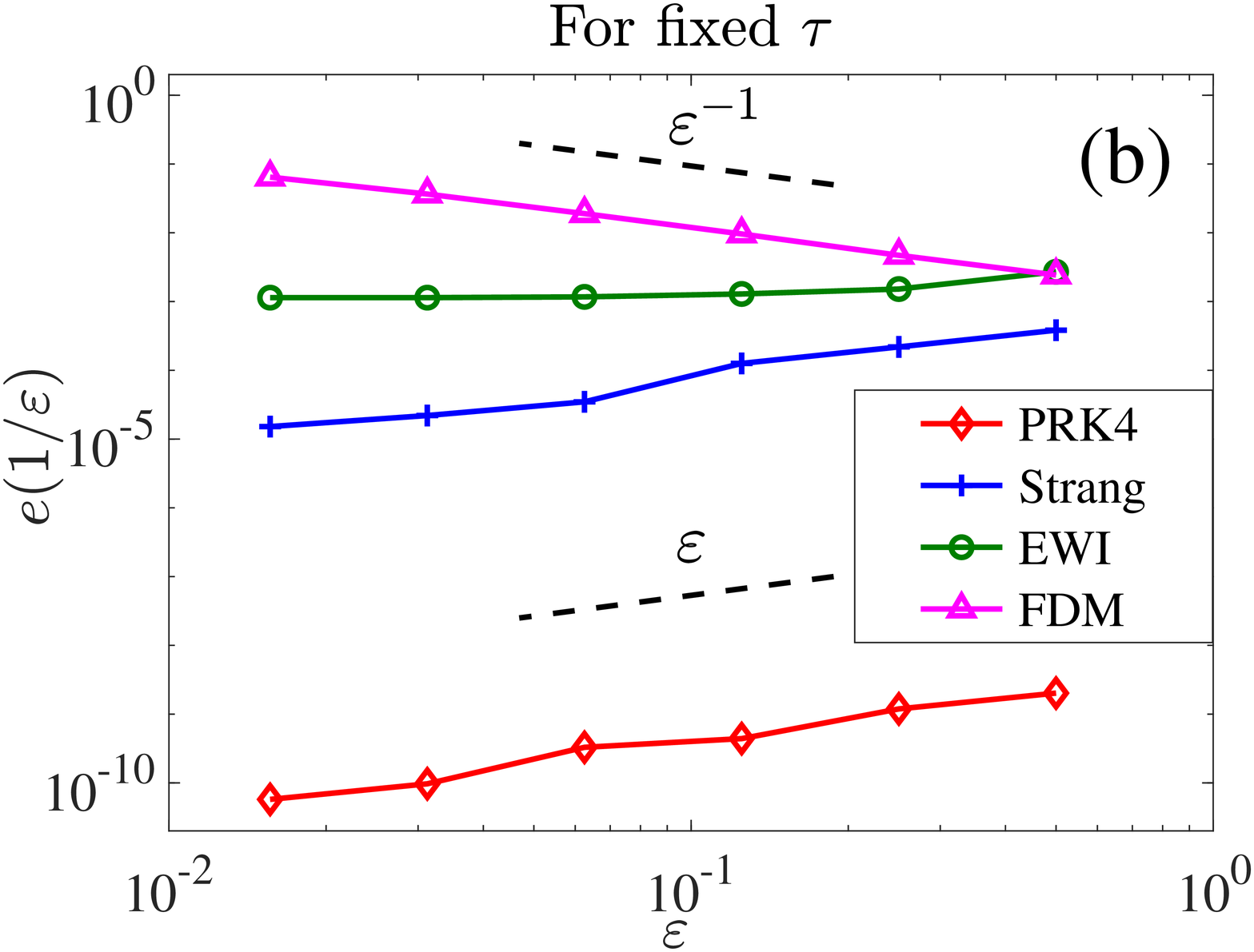}}
\end{minipage}
\caption{Temporal convergence rates of different temporal discretizations  for the Dirac equation \eqref{eq:Dirac_1D} over long-time dynamics at $t = 1/\varepsilon$:
convergence rates in $\tau$ (a), and convergence rates in $\eps$ (b).}
\label{fig:com}
\end{figure}

Fig. \ref{fig:com}(a) displays the temporal errors for the fixed $\eps = 1$ with different time step $\tau$. For the three second-order schemes, the second-order (Strang) time-splitting method  obtains smaller temporal errors than the other two methods with the same time step. The fourth-order time-splitting method (PRK4) not only has higher order convergent rate but also gives much smaller errors than the other three methods with the same time step. Fig. \ref{fig:com}(b) shows the long-time temporal errors of these methods for different $\eps$ with with a fixed time step $\tau$. The splitting methods have improved uniform error bounds like $O(\eps\tau^2)$ up to the long-time at $O(1/\eps)$. The EWI method has uniform error bounds, while the long-time temporal errors of the finite difference method depend on the parameter $\eps$ and behave like $O(1/\eps)$. As a result, time-splitting methods perform much better than  FDM and EWI in the long-time simulations.

\section{Conclusions} \label{sec:conclude}
Improved uniform error bounds for the time-splitting methods for the long-time dynamics of the Dirac equation with small electromagnetic potentials were rigorously established. With the help of the unitary property of the solution flow in $L^2(\Omega)$, the linear growth of the uniform error bound for the time-splitting methods was strictly proven. By employing the regularity compensation oscillation (RCO) technique, the improved uniform error bounds were proved to be $O(\eps\tau^2)$ and $O(h^m + \eps\tau^2)$ up to the long-time at $O(1/\eps)$ for the semi-discretization and full-discretization, respectively. Numerical results were shown to validate our error bounds and to demonstrate that they are sharp. Finally, comparisons of different time discretizations were presented to illustrate the superior property of the time-splitting methods for the numerical simulation of the long-time dynamics of the Dirac equation.

\section*{Acknowledgment}
This work was partially supported by the Ministry of Education of Singapore grant MOE2019-T2-1-063 (R-146-000-296-112, W. Bao \& Y. Feng).

\end{document}